\documentclass[10pt,reqno]{amsart}

\usepackage{a4wide}
\usepackage{dsfont} 
\usepackage{amssymb,amsmath}
\usepackage{mathrsfs}
\usepackage[latin10]{inputenc} 

\newtheorem{proposition}{Proposition}[section]
  \newtheorem{theorem}[proposition]{Theorem}
  \newtheorem{corollary}[proposition]{Corollary}
  \newtheorem{lemma}[proposition]{Lemma}
\theoremstyle{remark}
  \newtheorem{definition}[proposition]{Definition}
  \newtheorem{remark}[proposition]{Remark}
  \newtheorem{example}[proposition]{Example}

\newcommand{\cst}{\ifmmode\mathrm{C}^*\else{$\mathrm{C}^*$}\fi}
\newcommand{\wst}{\ifmmode\mathrm{C}^*\else{$\mathrm{W}^*$}\fi}
\newcommand{\st}{\;\vline\;}
\newcommand{\CC}{\mathbb{C}}

\newcommand{\TT}{\mathbb{T}}
\newcommand{\FF}{\mathbb{F}}
\newcommand{\tens}{\otimes}
\newcommand{\vtens}{\,\bar{\otimes}\,}
\newcommand{\id}{\mathrm{id}}
\newcommand{\comp}{\circ}
\newcommand{\rh}{\varrho}
\renewcommand{\th}{\vartheta}
\newcommand{\thL}{\vartheta^{\text{\tiny\rm{L}}}\;\!\!}
\newcommand{\I}{\mathds{1}}
\newcommand{\GG}{\mathbb{G}}
\newcommand{\HH}{\mathbb{H}}
\newcommand{\KK}{\mathbb{K}}
\newcommand{\cA}{\mathscr{A}}
\newcommand{\cN}{\mathscr{N}}
\newcommand{\sM}{\mathsf{M}}
\newcommand{\sA}{\mathsf{A}}
\newcommand{\sN}{\mathsf{N}}
\newcommand{\sX}{\mathsf{X}}
\newcommand{\cH}{\mathscr{H}}

\newcommand{\bra}[1]{\left\langle#1\,\vline\right.}
\newcommand{\ket}[1]{\left.\vline\,#1\right\rangle}
\newcommand{\hh}[1]{\widehat{#1}}
\newcommand{\act}{\alpha}
\newcommand{\Act}{\beta}
\newcommand{\op}{\text{\rm\tiny{op}}}

\newcommand{\flip}{\boldsymbol{\sigma}}

\DeclareMathOperator{\C}{C}
\DeclareMathOperator{\B}{B}
\DeclareMathOperator{\M}{M}
\DeclareMathOperator{\Mor}{Mor}
\DeclareMathOperator{\cK}{\mathscr{K}}
\DeclareMathOperator{\lin}{span}
\DeclareMathOperator{\linW}{\overline{span}^{\:\!\text{\tiny\rm{w}}}\:\!\!}
\DeclareMathOperator{\Linf}{\mathnormal{L}^\infty\;\!\!}
\DeclareMathOperator{\Ltwo}{\mathnormal{L}^2\;\!\!}

\numberwithin{equation}{section}

\raggedbottom

\author{Pawe{\l} Kasprzak}
\address{Department of Mathematical Methods in Physics, Faculty of Physics, University of Warsaw, Poland}
\email{pawel.kasprzak@fuw.edu.pl}

\author{Piotr M.~So{\l}tan} 
\address{Department of Mathematical Methods in Physics, Faculty of Physics, University of Warsaw, Poland}
\email{piotr.soltan@fuw.edu.pl}

\thanks{Supported by National Science Centre (NCN) grant no.~2011/01/B/ST1/05011}

\title{Quantum groups with projection on von Neumann algebra level}

\begin{document}

\begin{abstract}
We introduce an axiomatization of the notion of a semidirect product of locally compact quantum groups and study properties. Our approach is slightly different from the one introduced in the thesis of S.~Roy \cite{RoyPhd} and, unlike the investigations of Roy, we work within the von Neumann algebraic picture. This allows the use of powerful techniques related to crossed products by actions of locally compact quantum groups. In particular we show existence of a ``braided comultiplication'' on the algebra spanned by slices of ``braided multiplicative unitary''.
\end{abstract}

\maketitle

\section{Introduction}\label{intro}

This paper is devoted to the study of the notion of a semidirect product of locally compact quantum groups. Before introducing all relevant definitions let us briefly consider the classical case: let $G$ be a locally compact group. It is elementary that $G$ is (isomorphic to) a semidirect product of certain two of its closed subgroups $K$ and $H$ if and only if there exists a continuous homomorphism $\rh\colon{G}\to{G}$ such that $\rh\comp\rh=\rh$. Indeed, given $\rh$ we define $K=\ker{\rh}$ and $H=\mathrm{im}\,\rh$ and then any element $g\in{G}$ can be expressed as $kh$, where $h=\rh(g)\in{H}$ and $k=gh^{-1}\in{K}$. Since $K$ is a normal subgroup of $G$, we conclude that $G\cong{K}\rtimes{H}$. Equivalently one can say that $G$ has the structure of a semidirect product if and only if there exists a locally compact group $H$ and a pair of continuous homomorphisms
\[
\rh\colon{G}\longrightarrow{H}\quad\text{and}\quad\imath\colon{H}\longrightarrow{G}
\]
such that
\begin{equation}\label{rhoiota0}
\rh\comp\imath=\id_{H}.
\end{equation}

This last point of view on semidirect products will serve as the basis of our approach to semidirect products of locally compact quantum groups. We will introduce the concept of a locally compact quantum group with projection onto another locally compact quantum group (Definition \ref{GprojH}). This will generalize existence of maps $\rh$ and $\imath$ as above. In particular, we will show that if $\GG$ is a locally compact quantum group with projection onto a locally compact quantum group $\HH$ then $\HH$ is a closed quantum subgroup of $\GG$ and also a ``quotient'' of $\GG$. This corresponds exactly to the fact that maps $\rh$ and $\imath$ satisfying \eqref{rhoiota0} are surjective and injective respectively.

The approach of this paper to semidirect products of (locally compact) quantum groups is inspired by the thesis of S.~Roy (\cite{RoyPhd}). However, our approach is based on a different axiomatization of the same concept. Moreover while in \cite{RoyPhd} the quantum groups under investigation are objects defined by modular multiplicative unitaries and are studied exclusively on the level of \cst-algebras, we work with locally compact quantum groups and prefer the von Neumann algebraic framework. In particular we will often use the ultra-weak topology of a von Neumann algebra. The symbol $\linW{X}$ will denote the ultra-weak closure of the span of a subset $X$ of a von Neumann algebra. Similarly $\overline{X}^{\:\!\text{\tiny\rm{w}}}\:\!\!$ will denote the ultra-weak closure of $X$. The von Neumann algebra tensor product will be denoted by the symbol $\vtens$, while tensor product of vector spaces, Hilbert spaces and minimal tensor products of \cst-algebras will be written as $\tens$. The flip map on tensor products (of any of the above mentioned objects) will be denoted by $\flip$. The \emph{leg numbering notation}, by now a standard in quantum group theory, will be used throughout the paper.

Let us briefly describe the contents of the paper: Section \ref{prelim} contains basic information and notation pertaining to locally compact quantum groups (Subsection \ref{lcqgs}), the opposite and commutant locally compact quantum groups (Subsection \ref{oppcomm}), crossed products of von Neumann algebras by locally compact quantum group actions (Subsection \ref{cp}). Aside from their applications in later sections, a number of results from Subsection \ref{cp} are of independent interest. Particularly Proposition \ref{stronPodles} which shows thats an appropriate form of Podle\'s condition for actions of quantum groups on \cst-algebras (\cite{Sym}) has a von Neumann algebraic analog which, moreover, is always an automatic consequence of a von Neumann algebraic definition of an action.

Section \ref{QGP} introduces locally compact quantum groups with projection and basic results about these structures. Afterwards, in Section \ref{emer} we show among other things that if $\GG$ is a locally compact quantum group with projection onto $\HH$ then the von Neumann algebra $\Linf(\GG)$ is a crossed product of a certain subalgebra $\sN\subset\Linf(\GG)$ by an action of $\hh{\HH}^\op$. Moreover we identify $\sN$ as the closure of the set of slices of the ``braided multiplicative unitary'' (see \cite[Section 6]{RoyPhd}) and show that $\Delta_\GG$ restricts to a map $\sN\to\sN\boxtimes\sN$, where the latter algebra is a certain ``twist'' of the usual tensor product $\sN\vtens\sN$. The results are illustrated by examples in which both classical and quantum semidirect products are described. For more examples and a much more thorough treatment on the \cst-algebra level we refer to \cite[Sections 6.2.2--6.2.4]{RoyPhd}.

\section*{Acknowledgements}

The authors would like to express deep gratitude to Sutanu Roy, whose work inspired this paper, for fruitful discussions and suggestions. We would also like to thank Ralf Meyer and Stanis\l{}aw Woronowicz for stimulating exchanges on the subject of semidirect products of quantum groups, braided quantum groups and multiplicative unitaries. In particular the proof of Theorem \ref{glowne} was completed along the lines indicated to us by S.~Roy. In addition we would like to thank Adam Skalski for encouragement to pursue our investigation.

\section{Preliminaries}\label{prelim}

\subsection{Locally compact quantum groups}\label{lcqgs}

Let $\GG$ be a locally compact quantum group as defined by J.~Kustermans and S.~Vaes in \cite[Definition 1.1]{KVvN} (cf.~also \cite{KV,mnw}). The von Neumann algebra corresponding to $\GG$ will be denoted by the symbol $\Linf(\GG)$ and the comultiplication $\Linf(\GG)\to\Linf(\GG)\vtens\Linf(\GG)$will be $\Delta_\GG$. The Hilbert space defined via the GNS construction from the right Haar weight of $\GG$ will be denoted by $\Ltwo(\GG)$. As usual $\hh{\GG}$ will denote the dual locally compact quantum group. The Hilbert spaces $\Ltwo(\hh{\GG})$ and $\Ltwo(\GG)$ can (and will) be identified, so in particular, we have $\Linf(\GG),\Linf(\hh{\GG})\subset\B\bigl(\Ltwo(\GG)\bigr)$. The modular conjugations related to right Haar measures of $\GG$ and $\hh{\GG}$ will be denoted by $J$ and $\hh{J}$ respectively. The antiunitary operator $\hh{J}$ enters the formula for the \emph{unitary antipode} $R$ of $\GG$. More precisely we have
\[
\begin{aligned}
R(x)&=\hh{J}x^*\hh{J},&\quad{x}\in\Linf(\GG).
\end{aligned}
\]

The \emph{Kac-Takesaki operator} (i.e.~the right regular representation) of $\GG$ will be denoted by $W^\GG$. It is an element of $\Linf(\hh{\GG})\vtens\Linf(\GG)$ and satisfies
\[
\begin{split}
(\id\tens\Delta_\GG)W^\GG&=W^\GG_{12}{W_{13}^\GG},\\
(\Delta_{\hh{\GG}}\tens\id)W^\GG&=W_{23}^\GG{W_{13}^\GG}.
\end{split}
\]

\subsection{The opposite and commutant quantum group}\label{oppcomm}

In this subsection we will recall some material covered in \cite[Section 4]{KVvN}. Given a locally compact quantum group $\GG$ one can define two more associated quantum groups. The first one is the \emph{opposite quantum group} $\GG^\op$ defined by $\Linf(\GG^\op)=\Linf(\GG)$ and $\Delta_{\GG^\op}=\flip\comp\Delta_\GG$. The second is the \emph{commutant quantum group} $\GG'$ defined by $\Linf(\GG')=\Linf(\GG)'$ and
\[
\begin{aligned}
\Delta_{\GG'}(x')&=(J\tens{J})\Delta_\GG(Jx'J)(J\tens{J}),&\quad{x'}\in\Linf(\GG').
\end{aligned}
\]
It is easy to see that the Hilbert spaces corresponding to $\GG^\op$ and $\GG'$ can be naturally identified.

Similarly one can define $\hh{\GG}^\op$ and $\hh{\GG}'$. It turns out that
\[
\begin{aligned}
\hh{\GG^\op}&=\hh{\GG}',&\quad\hh{\GG'}&=\hh{\GG}^\op,&\quad(\GG')^\op&=(\GG^\op)'
\end{aligned}
\]
(see \cite{KVvN} for details).

Let $\Phi$ be the map $\B\bigl(\Ltwo(\GG)\bigr)\to\B\bigl(\Ltwo(\GG)\bigr)$ defined as $\Phi(y)=J\hh{J}y\hh{J}J$. When appropriately restricted, $\Phi$ provides an isomorphism of quantum groups between $\GG^\op$ and $\GG'$ and between $\hh{\GG}^\op$ and $\hh{\GG}'$ (\cite{KVvN,KaspSol}).

\subsection{Crossed products}\label{cp}

Let $\GG$ be a locally compact quantum group. Let $\sN$ be a von Neumann algebra and let $\act\colon\sN\to\sN\vtens\Linf(\GG)$ be an action of $\GG$ on $\sN$, i.e.~a normal unital $*$-homomorphism such that $(\act\tens\id)\comp\act=(\id\tens\Delta_\GG)\comp\act$. The \emph{crossed product} of $\sN$ by (the action $\act$ of) $\GG$ is the von Neumann subalgebra of $\sN\vtens\B\bigl(\Ltwo(\GG)\bigr)$ generated by $\act(\sN)$ and $\I\tens\Linf(\hh{\GG})$. It is denoted by the symbol $\sN\rtimes_\act\GG$. 

The crossed product admits a distinguished action of $\hh{\GG}^\op$ called the \emph{dual action}. This action, denoted by $\hh{\act}\colon\sN\rtimes_\act\GG\to(\sN\rtimes_\act\GG)\vtens\Linf(\hh{\GG}^\op)$ is defined by
\[
\begin{aligned}
\hh{\act}\bigl(\act(x)\bigr)&=\act(x)\tens\I,&\quad{x}\in\Linf(\GG),\\
\hh{\act}(\I\tens{y})&=\I\tens\Delta_{\hh{\GG}^\op}(y),&\quad{y}\in\Linf(\hh{\GG}).
\end{aligned}
\]

To simplify notation let us temporarily denote the crossed product $\sN\rtimes_\act\GG$ by the symbol $\sM$. Thus we have an action $\hh{\act}$ of $\hh{\GG}^\op$ on $\sM$. Moreover if we put $X=\I\tens{W^\GG}\in\sM\vtens\Linf(\GG)$ then $X$ satisfies
\begin{align}
(\id\tens\Delta_\GG)X&=X_{12}X_{13},\nonumber\\
(\hh{\act}\tens\id)X&=X_{13}W_{23}^\GG.\label{XX2}
\end{align}
To see \eqref{XX2} we compute:
\[
(\hh{\act}\tens\id)X=(\id\tens\Delta_{\hh{\GG}^\op}\tens\id)W_{23}^\GG=W_{24}^\GG{W_{34}^\GG}=X_{13}W_{23}^\GG.
\]

The structure $(\sM,\hh{\act},X)$ determines $\sN$ and $\act$. More precisely we have the following consequence of (a special case of) \cite[Proposition 1.22]{VaesVainerman}:

\begin{proposition}\label{qLan}
Let $\sM$ be a von Neumann algebra and let $\Act\colon\sM\to\sM\vtens\Linf(\hh{\GG}^\op)$ be an action of $\hh{\GG}^\op$ on $\sM$. Let $X\in\sM\vtens\Linf(\GG)$ be a unitary element such that
\[
\begin{split}
(\id\tens\Delta_\GG)X&=X_{12}X_{13},\\
(\Act\tens\id)X&=X_{13}W_{23}^\GG.
\end{split}
\]
Denote by $\sN$ the fixed point subalgebra of $\sM$:
\[
\sN=\bigl\{x\in\sM\st\Act(x)=x\tens\I\bigr\}.
\]
Then the map $\act\colon\sN\to\sN\vtens\Linf(\GG)$ defined by
\[
\act(x)=X(x\tens\I)X^* 
\]
is an action of $\GG$ on $\sN$ and there exists an isomorphism $\Psi\colon\sM\to\sN\rtimes_\act\GG$ such that
\begin{itemize}
\item[$\blacktriangleright$] $\hh{\act}\comp\Psi=(\Psi\tens\id)\comp\Act$,
\item[$\blacktriangleright$] for any $y\in\sN$ we have $\Psi(y)=\act(y)$,
\item[$\blacktriangleright$] $(\Psi\tens\id)X=W_{23}^\GG$.
\end{itemize}
\end{proposition}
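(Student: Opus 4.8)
The first assertion I would verify directly. As $X$ is a unitary in $\sM\vtens\Linf(\GG)\subseteq\sM\vtens\B\bigl(\Ltwo(\GG)\bigr)$, the formula $\act(x)=X(x\tens\I)X^*$ defines a normal unital $*$-homomorphism of $\sM$ into $\sM\vtens\B\bigl(\Ltwo(\GG)\bigr)$, and a routine leg-numbering computation using $(\id\tens\Delta_\GG)X=X_{12}X_{13}$ shows that $(\act\tens\id)\comp\act=(\id\tens\Delta_\GG)\comp\act$ on all of $\sM$. It remains to see that $\act$ maps $\sN$ into $\sN\vtens\Linf(\GG)$, and here the second relation enters: for $x\in\sN$ we have $\Act(x)=x\tens\I$, hence, since $x\tens\I\tens\I$ commutes with $W^\GG_{23}$,
\[
(\Act\tens\id)\act(x)=X_{13}W^\GG_{23}(x\tens\I\tens\I)W^{\GG*}_{23}X_{13}^{*}=X_{13}(x\tens\I\tens\I)X_{13}^{*}=\act(x)_{13}.
\]
So $\act(x)$ is $(\Act\tens\id)$-invariant; as $\sN\vtens\Linf(\GG)$ is exactly the set of $(\Act\tens\id)$-invariant elements of $\sM\vtens\Linf(\GG)$ (seen by slicing the $\Linf(\GG)$-leg), we get $\act(x)\in\sN\vtens\Linf(\GG)$, and together with coassociativity this shows $\act\colon\sN\to\sN\vtens\Linf(\GG)$ is an action of $\GG$ on $\sN$.

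To obtain the isomorphism $\Psi$ I would reduce to \cite[Proposition 1.22]{VaesVainerman}: the hypotheses above are precisely those of that proposition, with $\hh{\GG}^\op$ in the role of the acting quantum group and $W^\GG$ its Kac--Takesaki operator, so it remains only to make the dictionary between the two settings explicit and read off the three listed properties. The mechanism behind that result -- a form of Takesaki--Takai duality -- I would reproduce as follows. Combining $(\id\tens\Delta_\GG)X=X_{12}X_{13}$ with $(\Act\tens\id)X=X_{13}W^\GG_{23}$ one produces an injective normal unital $*$-homomorphism $\gamma\colon\Linf(\hh{\GG})\to\sM$ with $(\gamma\tens\id)W^\GG=X$; slicing the second relation then gives $\Act\comp\gamma=(\gamma\tens\id)\comp\Delta_{\hh{\GG}^\op}$, so that $\gamma$ intertwines the translation action of $\hh{\GG}^\op$ on itself with $\Act$. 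Next one shows that $\gamma(\Linf(\hh{\GG}))$ and the fixed point algebra $\sN$ together generate $\sM$, and that the only relations between them are the covariance relations defining the crossed product; this produces a normal $*$-isomorphism $\Psi\colon\sM\to\sN\rtimes_\act\GG$ with $\Psi(y)=\act(y)$ for $y\in\sN$ and $\Psi\bigl(\gamma(w)\bigr)=\I\tens w$ for $w\in\Linf(\hh{\GG})$. The second bullet is then built in; applying $\Psi\tens\id$ to $X=(\gamma\tens\id)W^\GG$ gives $(\Psi\tens\id)X=\I\tens W^\GG=W^\GG_{23}$, which is the third bullet; and $\hh{\act}\comp\Psi=(\Psi\tens\id)\comp\Act$ follows by evaluating both sides on $\sN$ (using $\Act(y)=y\tens\I$) and on $\gamma(\Linf(\hh{\GG}))$ (using $\Act\comp\gamma=(\gamma\tens\id)\comp\Delta_{\hh{\GG}^\op}$) and comparing with the defining formulas for the dual action.

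The main obstacle is the middle step of the preceding paragraph: exhibiting the canonical copy $\gamma(\Linf(\hh{\GG}))$ of $\Linf(\hh{\GG})$ inside $\sM$ and proving that, together with $\sN$, it generates $\sM$ with precisely the crossed-product relations -- equivalently, that $\Psi$ is well defined and bijective. This is the hard core of \cite[Proposition 1.22]{VaesVainerman}, and it rests on the biduality theorem for actions of locally compact quantum groups; everything else -- the first paragraph, the extraction of $\gamma$, and the verification of the three bullets -- is routine bookkeeping.
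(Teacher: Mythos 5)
Your proposal matches the paper's treatment: the paper offers no separate argument for Proposition \ref{qLan}, presenting it simply as a consequence of (a special case of) \cite[Proposition 1.22]{VaesVainerman}, which is exactly the reduction you make, with the hard Landstad/biduality core left to that citation. Your additional direct verification that $\act(x)=X(x\tens\I)X^*$ defines an action of $\GG$ on $\sN$ (via the slice characterization of $\sN\vtens\Linf(\GG)$) is correct and is just extra bookkeeping on top of the same approach.
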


Proposition \ref{qLan} is a quantum group version of Landstad's theorem for classical covariant systems (\cite{lan}). We will use a modified version of this proposition in which the commutant and opposite quantum groups of a locally compact quantum group $\hh{\HH}$ will be identified via the isomorphism $\Phi$ of Subsection \ref{oppcomm}.

\begin{corollary}\label{cpProp}
Let $\sM$ be a von Neumann algebra and let $\Act\colon\sM\to\sM\vtens\Linf(\HH)$ be an action of a locally compact quantum group $\HH$ on $\sM$. Let $Y\in\Linf(\hh{\HH})\vtens\sM$ be a unitary element such that
\[
\begin{split}
(\Delta_{\hh{\HH}}\tens\id)Y&=Y_{23}Y_{13},\\
(\id\tens\Act)Y&=Y_{12}W_{13}^\HH
\end{split} 
\]
and denote by $\sN$ the fixed point subalgebra of $\sM$:
\[
\sN=\bigl\{x\in\sM\st\Act(x)=x\tens\I\bigr\}.
\]
Define $\act\colon\sN\to\sN\vtens\Linf(\hh{\HH}^\op)$ by
\[
\begin{aligned}
\act(y)&=\flip\bigl(Y(\I\tens{y})Y^*\bigr),&\quad{y}\in\sN.
\end{aligned}
\]
Then $\act$ is an action of $\hh{\HH}^\op$ and $\sM$ is isomorphic to $\sN\rtimes_\act\hh{\HH}^\op$ in such a way that $\Act$ is identified with the dual action of
\[
{\hh{\hh{\HH}^{\raisebox{.27ex}{\op}}}}^\op=\Bigl(\hh{\hh{\HH}}{\,}'\Bigr)^\op=(\HH')^\op\cong(\HH^\op)^\op=\HH.
\]

The isomorphism $\Psi\colon\sM\to\sN\rtimes_\act\hh{\HH}^\op$ satisfies
\begin{itemize}
\item[$\blacktriangleright$] $\hh{\act}\comp\Psi=(\Psi\tens\Phi)\comp\Act$,
\item[$\blacktriangleright$] $\Psi(y)=\act(y)$ for all $y\in\sN$,
\item[$\blacktriangleright$] $(\Phi\tens\Psi)Y=W_{13}^\HH$.
\end{itemize}
\end{corollary}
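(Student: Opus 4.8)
The plan is to obtain Corollary~\ref{cpProp} as a direct consequence of Proposition~\ref{qLan}, applied with the locally compact quantum group there taken to be $\GG:=\hh{\HH}^\op$, after three essentially formal translations. Since $\Linf(\hh{\HH}^\op)=\Linf(\hh{\HH})$, the unitary $Y$ lives in $\Linf(\GG)\vtens\sM$ and I set $X:=\flip(Y)\in\sM\vtens\Linf(\GG)$. Using $\Delta_{\hh{\HH}^\op}=\flip\comp\Delta_{\hh{\HH}}$ together with a permutation of the three tensor legs that sends $Y_{23}Y_{13}$ to $X_{12}X_{13}$, the first hypothesis $(\Delta_{\hh{\HH}}\tens\id)Y=Y_{23}Y_{13}$ of the corollary is seen to be equivalent to the relation $(\id\tens\Delta_\GG)X=X_{12}X_{13}$ required of $X$ in Proposition~\ref{qLan}. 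Next, I note that $\hh{\GG}^\op=(\HH')^\op$ and that the map $\Phi$ of Subsection~\ref{oppcomm}, which implements a quantum group isomorphism $\HH^\op\to\HH'$, therefore also implements one $\HH\to(\HH')^\op=\hh{\GG}^\op$. Consequently $\Act':=(\id\tens\Phi)\comp\Act$ is an action of $\hh{\GG}^\op$ on $\sM$, and because $\Phi$ is injective and unital the fixed point algebra of $\Act'$ coincides with the algebra $\sN$ defined in the corollary.

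It remains to verify the second hypothesis of Proposition~\ref{qLan}, namely $(\Act'\tens\id)X=X_{13}W^\GG_{23}$. Starting from $(\id\tens\Act)Y=Y_{12}W^\HH_{13}$ and rearranging the three legs so that $Y_{12}$ becomes $X_{13}$ and $W^\HH_{13}$ becomes $\bigl(\flip(W^\HH)\bigr)_{23}$ one first gets $(\Act\tens\id)X=X_{13}\bigl(\flip(W^\HH)\bigr)_{23}$, and then composing with $\id\tens\Phi\tens\id$ gives $(\Act'\tens\id)X=X_{13}\bigl((\Phi\tens\id)\flip(W^\HH)\bigr)_{23}$. So the required relation amounts to identifying $W^{\hh{\HH}^\op}$ with the appropriate $\Phi$-conjugate of $\flip(W^\HH)$ --- an identity of the type $W^{\hh{\HH}^\op}=(\Phi\tens\id)\flip(W^\HH)$, which one reads off from the relations between the Kac--Takesaki operators of $\hh{\HH}$, $\hh{\HH}^\op$ and $\hh{\HH}'$ in \cite[Section~4]{KVvN}. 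I expect this identification, together with the careful tracking of the various commutant/opposite identifications and of the modular conjugations $J,\hh{J}$ hidden inside $\Phi$, to be the main point requiring attention; everything else is formal bookkeeping.

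Granting this, Proposition~\ref{qLan} applies to $(\sM,\Act',X)$. The action of $\GG=\hh{\HH}^\op$ on $\sN$ that it produces is $\act(y)=X(y\tens\I)X^*$; since $X=\flip(Y)$, $\flip$ is a $*$-homomorphism, and $y\tens\I=\flip(\I\tens y)$, this is exactly $\flip\bigl(Y(\I\tens y)Y^*\bigr)$, the formula in the statement. The isomorphism $\Psi\colon\sM\to\sN\rtimes_\act\hh{\HH}^\op$ supplied by the proposition identifies $\Act'$, and hence through $\Phi$ also the action $\Act$ of $\HH$, with the dual action of $\hh{\GG}^\op=(\HH')^\op\cong\HH$, as in the displayed chain of identifications. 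Finally I would transcribe the three properties of $\Psi$ into the present notation: $\hh{\act}\comp\Psi=(\Psi\tens\id)\comp\Act'$ becomes $\hh{\act}\comp\Psi=(\Psi\tens\Phi)\comp\Act$ because $\Act'=(\id\tens\Phi)\comp\Act$; $\Psi(y)=\act(y)$ for $y\in\sN$ is unchanged; and $(\Psi\tens\id)X=W^\GG_{23}$ becomes, upon flipping $X$ back to $Y$ and using once more the identification of $W^{\hh{\HH}^\op}$ with $W^\HH$, the relation $(\Phi\tens\Psi)Y=W^\HH_{13}$. This yields all the assertions of the corollary.
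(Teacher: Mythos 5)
Your proposal is correct and follows essentially the same route as the paper, which offers no argument for Corollary \ref{cpProp} beyond the remark that it is Proposition \ref{qLan} applied with $\GG=\hh{\HH}^\op$ and with the commutant and opposite quantum groups of $\hh{\HH}$ identified via $\Phi$ --- precisely the translation ($X=\flip(Y)$, $\Act'=(\id\tens\Phi)\comp\Act$, same fixed-point algebra) that you carry out. The single identity you defer to \cite{KVvN}, expressing $W^{\hh{\HH}^\op}$ as the $\Phi$-twisted flip of $W^{\HH}$, is exactly the content of that identification and is the standard relation between the Kac--Takesaki operators of $\hh{\HH}$, $\hh{\HH}^\op$ and $\hh{\HH}'$, so nothing essential is missing from your argument.
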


The next proposition giving a simplified picture of crossed products by quantum group actions seems to be well known, but we were unable to find it stated explicitly in the literature (see Remark \ref{after}). Using it we will prove a result which will be of importance in Section \ref{emer}.

\begin{proposition}\label{span}
Let $\GG$ be a locally compact quantum group acting on a von Neumann algebra $\sN$ via $\act\colon\sN\to\sN\vtens\Linf(\GG)$. Then the crossed product $\sN\rtimes_\act\GG$ is the ultra-weakly closed linear span of elements of the form $\act(y)(\I\tens{x})$ with $y\in\sN$ and $x\in\Linf(\hh{\GG})$:
\begin{equation}\label{NrtimesG}
\sN\rtimes_\act\GG=\linW\bigl\{\act(y)(\I\tens{x})\st{y}\in\sN,\:x\in\Linf(\hh{\GG})\bigr\}.
\end{equation}
\end{proposition}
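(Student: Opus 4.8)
The plan is to show that the right-hand side of \eqref{NrtimesG} is both contained in $\sN\rtimes_\act\GG$ and contains a generating set, and that it is already a von Neumann subalgebra, so that the two sides agree. The containment ``$\subseteq$'' is immediate: $\act(y)\in\act(\sN)\subset\sN\rtimes_\act\GG$ and $\I\tens x\in\I\tens\Linf(\hh\GG)\subset\sN\rtimes_\act\GG$ by definition of the crossed product, so each product $\act(y)(\I\tens x)$ lies in $\sN\rtimes_\act\GG$, and the ultra-weakly closed linear span of such elements stays inside the von Neumann algebra $\sN\rtimes_\act\GG$.

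For the reverse inclusion, denote by $\sA$ the ultra-weakly closed linear span on the right-hand side of \eqref{NrtimesG}. Since $\sN\rtimes_\act\GG$ is by definition generated (as a von Neumann algebra) by $\act(\sN)$ and $\I\tens\Linf(\hh\GG)$, it suffices to show that $\sA$ is a von Neumann algebra containing these two sets. Containment of $\act(\sN)$ is clear (take $x=\I$), and containment of $\I\tens\Linf(\hh\GG)$ is clear (take $y=\I$, using $\act(\I)=\I\tens\I$). So the whole content is that $\sA$ is a $*$-subalgebra (it is automatically ultra-weakly closed and contains the unit). The key algebraic identity I would establish is a ``commutation relation'' expressing $(\I\tens x)\act(y)$ back in terms of elements of the form $\act(y')(\I\tens x')$; this is exactly the quantum Landstad-type computation, carried out using the pentagon/modularity properties of $W^\GG$. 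Concretely, $\I\tens\Linf(\hh\GG)=\I\tens\linW\{(\omega\tens\id)W^\GG\}$, and one computes $(\I\tens(\omega\tens\id)W^\GG)\,\act(y)$ by inserting the intertwining relation for $\act$, namely $(\act\tens\id)W^\GG_{?} \dots$, more precisely using that $\act$ is an action together with the identity relating $W^\GG_{23}$ and $\Delta_\GG$ on the third leg, to move the $\Linf(\hh\GG)$-factor to the right at the cost of a slice of $\act(\text{something})$. After taking slices in the appropriate leg one obtains $(\I\tens x)\act(y)\in\sA$ for all $x,y$, whence $\sA$ is closed under products of its spanning elements, and closedness under $*$ follows similarly (or from $\act(y)^*(\I\tens x)^* = \act(y^*)(\I\tens x^*)$ plus the product relation just established). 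Taking ultra-weak closures throughout, $\sA$ is a von Neumann algebra.

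The main obstacle — and the only genuinely non-routine point — is the commutation relation that lets one pull a factor from $\I\tens\Linf(\hh\GG)$ through $\act(\sN)$; everything else is bookkeeping with leg numbering and ultra-weak density. The cleanest way I would organize this is to use the standard fact that for an action $\act$ one has $\act(y)=W^\GG_{?}(\dots)$-type covariance, or, more directly, to appeal to the well-known description of the crossed product as $\linW\{(\id\tens\omega)(\act(y))\cdot(\I\tens x)\}$ which in turn rests on the Tomita–Takesaki / modular theory behind \cite{VaesVainerman}; indeed this is why the authors remark (Remark \ref{after}) that the statement is ``well known'' — the substance is entirely in that commutation step, and one should be careful that all manipulations are valid ultra-weakly (slices of unitaries, normality of $\act$) rather than merely formally.
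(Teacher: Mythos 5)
Your proposal takes essentially the same route as the paper: the paper also reduces everything to the commutation relation, computing $\bigl(\I\tens[(\id\tens\omega_{\xi,\eta})W^\GG]\bigr)\act(y)$ by inserting ${W_{23}^\GG}^*W_{23}^\GG$, using that $W^\GG$ implements $\Delta_\GG$ together with $(\id\tens\Delta_\GG)\comp\act=(\act\tens\id)\comp\act$, and expanding over an orthonormal basis to land back in the span, so that the weak closure is a von Neumann algebra containing $\act(\sN)$ and $\I\tens\Linf(\hh{\GG})$, exactly as you sketch. The only cosmetic discrepancy is that with the paper's convention $W^\GG\in\Linf(\hh{\GG})\vtens\Linf(\GG)$, the generating slices of $\Linf(\hh{\GG})$ are of the form $(\id\tens\omega)W^\GG$ rather than $(\omega\tens\id)W^\GG$.
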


\begin{proof}
Denote the right hand side of \eqref{NrtimesG} by $\sX$ and let
\[
\cA_0=\lin\bigl\{\act(y)\bigl(\I\tens[(\id\tens\omega_{\xi,\eta})W^\GG]\bigr)\st{y}\in\sN,\:\xi,\eta\in\Ltwo(\GG)\bigr\}.
\]
Note that $\cA_0$ is ultra-weakly dense in $\sX$.

Now take $y\in\sN$ and $\xi,\eta\in\Ltwo(\GG)$. Let us compute the following product
\[
\begin{split}
\bigl(\I\tens[(\id\tens\omega_{\xi,\eta})W^\GG]\bigr)\act(y)
&=(\id\tens\id\tens\omega_{\xi,\eta})(W_{23}^\GG\act(y)_{12})\\
&=(\id\tens\id\tens\omega_{\xi,\eta})(W_{23}^\GG\act(y)_{12}{W_{23}^\GG}^*W_{23}^\GG)\\
&=(\id\tens\id\tens\omega_{\xi,\eta})\bigl((\id\tens\Delta_\GG)\act(y)\cdot{W_{23}^\GG}\bigr)\\
&=(\id\tens\id\tens\omega_{\xi,\eta})\bigl((\act\tens\id)\act(y)\cdot{W_{23}^\GG}\bigr).
\end{split}
\]
Let $\{e_i\}$ be an orthonormal basis of $\Ltwo(\GG)$. Inserting $\I=\sum\limits_i\ket{e_i}\!\bra{e_i}$ after $\act(y)$ on the right hand side of the above expression we obtain
\[
\bigl(\I\tens[(\id\tens\omega_{\xi,\eta})W^\GG]\bigr)\act(y)=
\sum_i\act\bigl((\id\tens\omega_{\xi,e_i})\act(y)\bigr)\bigl(\I\tens[(\id\tens\omega_{e_i,\eta})W^\GG]\bigr).
\]
It follows from this that the $*$-algebra generated by $\cA_0$ is contained in $\sX$. Let $\cA$ be the weak closure of $\cA_0$. Then $\cA$ is a von Neumann algebra contained in $\sX$. Moreover $\cA$ contains $\act(\sN)$ and $\I\tens\Linf(\hh{\GG})$. Thus $\cA\supset\sN\rtimes_\act\GG$. On the other hand $\sX\subset\sN\rtimes_\act\GG$, so we must have $\cA=\sX=\sN\rtimes_\act\GG$.
\end{proof}

\begin{remark}\label{after}
Results very similar to Proposition \ref{span} are present in the literature in the \cst-algebraic context (e.g.~\cite[Lemme 7.2]{BS}, or the computation in \cite[Page 191]{SkZa}). However, we cannot use them in any way to prove Proposition \ref{span} because the \cst-algebraic notion of an action $\act$ of a locally compact quantum group $\GG$ on a \cst-algebra $\sA$ demands that elements of the form $\act(y)(\I\tens{x})$ with $y\in\sA$ and $x\in\C_0(\GG)$ lie in $\sA\tens\C_0(\GG)$. This is a condition on $\act$ which we do not assume.
\end{remark}

Let $\act$ be an action of a locally compact quantum group $\GG$ on a von Neumann algebra $\sN$. An appropriate modification of \cite[Proposition 1.20]{VaesVainerman} says that $(\sN\rtimes_\act\GG)\rtimes_{\hh{\act}}\hh{\GG}^\op$ is isomorphic to $\sN\vtens\B\bigl(\Ltwo(\GG)\bigr)$ is such a way that
\begin{itemize}
\item[$\blacktriangleright$] for $u\in\sN\rtimes_\act\GG$ the element $\hh{\act}(u)$ of $(\sN\rtimes_\act\GG)\rtimes_{\hh{\act}}\hh{\GG}^\op$ is mapped to $u\in\sN\vtens\B\bigl(\Ltwo(\GG)\bigr)$,
\item[$\blacktriangleright$] for any $v\in\Linf(\hh{\hh{\GG}^\op})=\Linf(\GG)'$ the element $(\I\tens{v})$ of $(\sN\rtimes_\act\GG)\rtimes_{\hh{\act}}\hh{\GG}^\op$ is mapped to $(\I\tens{v})\in\I\tens\Linf(\GG)'\subset\sN\vtens\B\bigl(\Ltwo(\GG)\bigr)$.
\end{itemize}
We may apply Proposition \ref{span} to the crossed product of $\sN\rtimes_\act\GG$ by $\hh{\GG}^\op$ to conclude that
\begin{equation}\label{NBH}
\sN\vtens\B\bigl(\Ltwo(\GG)\bigr)=\linW\bigl\{u(\I\tens{v})\st{u}\in\sN\rtimes_\act\GG,\:v\in\Linf(\GG)'\bigr\}.
\end{equation}

Taking slices of the right legs of both sides of \eqref{NBH} with normal functionals we immediately find that
\begin{equation}\label{N2}
\sN\subset\linW\bigl\{(\id\tens\omega)(u)\st{u}\in\sN\rtimes_\act\GG,\:\omega\in\B(\Ltwo(\GG))_*\bigr\}.
\end{equation}
On the other hand, the right hand side of \eqref{N2} is contained in $\sN$, because $\sN\rtimes_\act\GG\subset\sN\vtens\B\bigl(\Ltwo(\GG)\bigr)$. We can summarize these considerations in the next proposition.

\begin{proposition}\label{Nslice}
Let $\GG$ be a locally compact quantum group acting on a von Neumann algebra $\sN$ via $\act\colon\sN\to\sN\vtens\Linf(\GG)$. Then
\[
\linW\bigl\{(\id\tens\omega)(u)\st{u}\in\sN\rtimes_\act\GG,\:\omega\in\B(\Ltwo(\GG))_*\bigr\}=\sN.
\]
\end{proposition}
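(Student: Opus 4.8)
The plan is to reduce the statement to the two displayed facts \eqref{NBH} and \eqref{N2} already established in the preceding paragraphs. Indeed, the discussion immediately before Proposition \ref{Nslice} already contains essentially the full argument, so the ``proof'' will amount to assembling those pieces cleanly and checking the one inclusion that was only asserted.

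First I would recall the equality \eqref{NBH}, which identifies $\sN\vtens\B\bigl(\Ltwo(\GG)\bigr)$ with the ultra-weak closed span of elements $u(\I\tens v)$ for $u\in\sN\rtimes_\act\GG$ and $v\in\Linf(\GG)'$; this in turn rests on the biduality isomorphism $(\sN\rtimes_\act\GG)\rtimes_{\hh{\act}}\hh{\GG}^\op\cong\sN\vtens\B\bigl(\Ltwo(\GG)\bigr)$ (the modified \cite[Proposition 1.20]{VaesVainerman}) together with Proposition \ref{span} applied to the second crossed product. Slicing the right legs of \eqref{NBH} by arbitrary normal functionals $\omega\in\B(\Ltwo(\GG))_*$ and using that slices are ultra-weakly continuous gives that $\sN$ is contained in the ultra-weak closed span of $\bigl\{(\id\tens\omega)(u)\st u\in\sN\rtimes_\act\GG\bigr\}$, which is exactly inclusion \eqref{N2}; here one uses that for fixed $v\in\Linf(\GG)'$ and $\omega$ the functional $\omega(\,\cdot\,(\I\tens v))$ — more precisely the corresponding functional on $\B(\Ltwo(\GG))$ — is again normal, so the sliced elements $(\id\tens\omega)\bigl(u(\I\tens v)\bigr)$ are themselves of the required form.

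The reverse inclusion is the only genuinely new point, and it is essentially immediate: since the crossed product $\sN\rtimes_\act\GG$ is by definition a von Neumann subalgebra of $\sN\vtens\B\bigl(\Ltwo(\GG)\bigr)$, any element $u\in\sN\rtimes_\act\GG$ lies in $\sN\vtens\B\bigl(\Ltwo(\GG)\bigr)$, and therefore $(\id\tens\omega)(u)\in\sN$ for every normal functional $\omega$ on $\B\bigl(\Ltwo(\GG)\bigr)$ by the standard slice-map property for von Neumann tensor products (the right slice of an element of $\sN\vtens\B(\cH)$ lands in $\sN$, and $\sN$ is ultra-weakly closed). Combining the two inclusions yields the claimed equality.

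I do not expect any serious obstacle here; the main subtlety is purely bookkeeping — making sure the identifications of $\Linf(\hh{\hh{\GG}^\op})$ with $\Linf(\GG)'$ and the placement of the extra leg $\I\tens v$ are handled so that the slice in \eqref{N2} really produces elements $(\id\tens\omega)(u)$ with $u$ in the \emph{first} crossed product $\sN\rtimes_\act\GG$ rather than in the iterated one. Once \eqref{NBH} and \eqref{N2} are in hand, as they are in the text, the proof is a two-line assembly.
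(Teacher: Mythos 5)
Your proposal is correct and follows essentially the same route as the paper, which proves Proposition \ref{Nslice} precisely by combining the biduality isomorphism with Proposition \ref{span} to get \eqref{NBH}, slicing to obtain \eqref{N2}, and then noting the reverse inclusion from $\sN\rtimes_\act\GG\subset\sN\vtens\B\bigl(\Ltwo(\GG)\bigr)$. Your added care about the normality of the sliced functionals and the placement of the extra leg is a fine (if routine) check, but it introduces nothing beyond the paper's own argument.
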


The next proposition and Corollary \ref{invN} will be useful tools in section \ref{emer}.

\begin{proposition}\label{dowcN}
Let $\GG$ be a locally compact quantum group acting on a von Neumann algebra $\sN$ via $\act\colon\sN\to\sN\vtens\Linf(\GG)$ and let $\cN$ be a subset of $\sN$ such that
\begin{equation}\label{N3}
\linW\bigl\{\act(y)(\I\tens{x})\st{y}\in\cN,\:x\in\Linf(\hh{\GG})\bigr\}=\sN\rtimes_\act\GG.
\end{equation}
Then
\[
\sN=\linW\bigl\{(\id\tens\omega)\act(y)\st{y}\in\cN,\:\omega\in\B(\Ltwo(\GG))_*\bigr\}.
\]
\end{proposition}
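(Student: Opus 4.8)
The plan is to combine Proposition \ref{Nslice} with the hypothesis \eqref{N3}. By Proposition \ref{Nslice} we already know that $\sN$ equals the ultra-weakly closed linear span of all elements $(\id\tens\omega)(u)$ with $u\in\sN\rtimes_\act\GG$ and $\omega\in\B(\Ltwo(\GG))_*$, so it suffices to rewrite these slices in terms of the generating set furnished by $\cN$.

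First I would use that for a fixed $\omega\in\B(\Ltwo(\GG))_*$ the slice map $\id\tens\omega\colon\sN\vtens\B(\Ltwo(\GG))\to\sN$ is normal, hence ultra-weakly continuous, and therefore carries the ultra-weak closure of a set into the ultra-weak closure of its image. Applying this to \eqref{N3} gives
\[
(\id\tens\omega)(\sN\rtimes_\act\GG)\subset\linW\bigl\{(\id\tens\omega)\bigl(\act(y)(\I\tens{x})\bigr)\st{y}\in\cN,\:x\in\Linf(\hh{\GG})\bigr\}.
\]
Next, for $y\in\cN$, $x\in\Linf(\hh{\GG})$ and $\omega\in\B(\Ltwo(\GG))_*$ I would observe the elementary identity $(\id\tens\omega)\bigl(\act(y)(\I\tens{x})\bigr)=(\id\tens\omega_x)\act(y)$, where $\omega_x\in\B(\Ltwo(\GG))_*$ is the normal functional $a\mapsto\omega(ax)$ (its normality is clear since $x$ is a bounded operator). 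Consequently each slice $(\id\tens\omega)(u)$ of an element $u\in\sN\rtimes_\act\GG$ lies in $\linW\bigl\{(\id\tens\nu)\act(y)\st{y}\in\cN,\:\nu\in\B(\Ltwo(\GG))_*\bigr\}$, and taking the ultra-weakly closed span over all such $u$ and $\omega$, together with Proposition \ref{Nslice}, yields
\[
\sN\subset\linW\bigl\{(\id\tens\omega)\act(y)\st{y}\in\cN,\:\omega\in\B(\Ltwo(\GG))_*\bigr\}.
\]

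For the reverse inclusion I would simply note that $\act(y)\in\sN\vtens\Linf(\GG)\subset\sN\vtens\B(\Ltwo(\GG))$, so $(\id\tens\omega)\act(y)\in\sN$ for every $\omega\in\B(\Ltwo(\GG))_*$; since $\sN$ is ultra-weakly closed, the right-hand side is contained in $\sN$. The two inclusions give the claimed equality. The only point requiring care is the bookkeeping with ultra-weak closures and the normality of slice maps; there is no genuine obstacle, as the entire argument reduces to the already established Propositions \ref{span} and \ref{Nslice}.
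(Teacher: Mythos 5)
Your proof is correct and follows essentially the same route as the paper: slice both sides of \eqref{N3} with normal functionals, absorb the factor $\I\tens{x}$ into the functional (the paper leaves this implicit, you spell it out via $\omega_x=\omega(\,\cdot\,x)$), invoke Proposition \ref{Nslice} for one inclusion, and use $\act(y)\in\sN\vtens\Linf(\GG)$ for the reverse one. No gaps.
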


\begin{proof}
Applying all $\omega\in\B\bigl(\Ltwo(\GG)\bigr)_*$ to the right legs of both sides of \eqref{N3} and using Proposition \ref{Nslice} we see that
\begin{equation}\label{N1}
\sN\subset\linW\bigl\{(\id\tens\omega)\act(y)\st{y}\in\cN,\:\omega\in\B(\Ltwo(\GG))_*\bigr\}
\end{equation}
But for each $y\in\cN$ the element $\act(y)$ belongs to $\sN\vtens\Linf(\GG)$, so the right hand side of \eqref{N1} is contained in $\sN$.
\end{proof}

By applying Proposition \ref{N3} to the special case $\cN=\sN$ we obtain the next result which says that all von Neumann algebraic actions of locally compact quantum groups on von Neumann algebras satisfy an analog of a weakened form of Podle\'s condition. 

\begin{corollary}\label{weakPodles}
Let $\GG$ be a locally compact quantum group acting on a von Neumann algebra $\sN$ via $\act\colon\sN\to\sN\vtens\Linf(\GG)$. Then
\[
\sN=\linW\bigl\{(\id\tens\omega)\act(y)\st{y}\in\sN,\:\omega\in\B(\Ltwo(\GG))_*\bigr\}.
\]
\end{corollary}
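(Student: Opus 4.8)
The plan is simply to feed Proposition \ref{dowcN} with the largest admissible choice of subset, namely $\cN=\sN$ itself. With this choice the hypothesis \eqref{N3} of Proposition \ref{dowcN} becomes the assertion
\[
\linW\bigl\{\act(y)(\I\tens{x})\st{y}\in\sN,\:x\in\Linf(\hh{\GG})\bigr\}=\sN\rtimes_\act\GG,
\]
and this is precisely the content of Proposition \ref{span}. Hence the hypothesis is automatically satisfied, and the conclusion of Proposition \ref{dowcN} reads
\[
\sN=\linW\bigl\{(\id\tens\omega)\act(y)\st{y}\in\sN,\:\omega\in\B(\Ltwo(\GG))_*\bigr\},
\]
which is exactly the claimed equality. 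So the entire proof consists of combining Proposition \ref{span} with Proposition \ref{dowcN}.

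In other words, the only thing to observe is that Proposition \ref{span} already exhibits a generating set for the crossed product indexed by \emph{all} of $\sN$ (not merely by some proper subset $\cN$), so that the general interpolation statement of Proposition \ref{dowcN} applies verbatim to it. There is no real obstacle here: the substantive work has been carried out upstream --- in Proposition \ref{span}, whose proof uses the pentagon-type identity for $W^\GG$ together with the action property $(\act\tens\id)\comp\act=(\id\tens\Delta_\GG)\comp\act$, and in Proposition \ref{dowcN}, which in turn rests on Proposition \ref{Nslice} and the biduality identity \eqref{NBH}. The one conceptual point worth flagging for the reader is exactly this reduction: the weakened Podle\'s condition is not an extra hypothesis but a formal consequence of the von Neumann algebraic definition of an action, via the Landstad-type description of the crossed product.
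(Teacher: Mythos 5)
Your proposal is correct and coincides with the paper's own argument: the corollary is obtained by applying Proposition \ref{dowcN} in the special case $\cN=\sN$, with the hypothesis \eqref{N3} supplied by Proposition \ref{span}. Nothing further is needed.
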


If one additionally assumes in Proposition \ref{dowcN} an appropriate form of ``invariance'' of the subset $\cN$ we get a stronger result:

\begin{corollary}
In the situation from Proposition \ref{dowcN}, if $\cN$ is a subspace of $\sN$ such that
\begin{equation}\label{invN}
\act(\cN)\subset\overline{\cN\tens\Linf(\GG)}^{\:\!\text{\tiny\rm{w}}}
\end{equation}
then $\cN$ is ultra-weakly dense in $\sN$.
\end{corollary}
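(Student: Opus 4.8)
The plan is simply to combine Proposition \ref{dowcN} with the two extra hypotheses. Since $\cN$ is now assumed to be a subspace, its ultra-weak closure $\overline{\cN}^{\:\!\text{\tiny\rm{w}}}$ is an ultra-weakly closed linear subspace of $\sN$. Thus, once we show that every element of the form $(\id\tens\omega)\act(y)$ with $y\in\cN$ and $\omega\in\B(\Ltwo(\GG))_*$ lies in $\overline{\cN}^{\:\!\text{\tiny\rm{w}}}$, that closed subspace will automatically contain the ultra-weakly closed span of all such elements, and Proposition \ref{dowcN} will give
\[
\sN=\linW\bigl\{(\id\tens\omega)\act(y)\st{y}\in\cN,\:\omega\in\B(\Ltwo(\GG))_*\bigr\}\subset\overline{\cN}^{\:\!\text{\tiny\rm{w}}}\subset\sN,
\]
so that $\overline{\cN}^{\:\!\text{\tiny\rm{w}}}=\sN$, i.e.~$\cN$ is ultra-weakly dense in $\sN$.

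It remains to verify the claimed membership. Fix $\omega\in\B(\Ltwo(\GG))_*$ and look at the slice map $\id\tens\omega\colon\sN\vtens\Linf(\GG)\to\sN$. It is normal, hence ultra-weakly continuous, and on elementary tensors it satisfies $(\id\tens\omega)(y\tens a)=\omega(a)\,y$; since $\cN$ is a subspace, it therefore carries the algebraic span $\cN\tens\Linf(\GG)$ into $\cN$, and by ultra-weak continuity it carries $\overline{\cN\tens\Linf(\GG)}^{\:\!\text{\tiny\rm{w}}}$ into $\overline{\cN}^{\:\!\text{\tiny\rm{w}}}$. Now the invariance condition \eqref{invN} states exactly that $\act(y)\in\overline{\cN\tens\Linf(\GG)}^{\:\!\text{\tiny\rm{w}}}$ for every $y\in\cN$, so applying $\id\tens\omega$ yields $(\id\tens\omega)\act(y)\in\overline{\cN}^{\:\!\text{\tiny\rm{w}}}$, which is what was needed.

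There is no real obstacle here: the real work has already been done in Proposition \ref{dowcN}, which presents $\sN$ as the ultra-weakly closed span of the slices $(\id\tens\omega)\act(y)$, $y\in\cN$; the hypotheses that $\cN$ is a subspace and that \eqref{invN} holds are precisely what is needed to fold those slices back into $\overline{\cN}^{\:\!\text{\tiny\rm{w}}}$. The only point demanding a little care is the routine fact that the ultra-weak closure of a linear subspace is again a linear subspace, which is exactly what licenses the passage from ``each individual slice lies in $\overline{\cN}^{\:\!\text{\tiny\rm{w}}}$'' to ``$\linW$ of all the slices lies in $\overline{\cN}^{\:\!\text{\tiny\rm{w}}}$''.
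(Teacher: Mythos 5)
Your proof is correct and is essentially the paper's own argument: invoke Proposition \ref{dowcN} to write $\sN$ as the ultra-weakly closed span of the slices $(\id\tens\omega)\act(y)$, $y\in\cN$, and use \eqref{invN} to see that this span is contained in $\overline{\cN}^{\:\!\text{\tiny\rm{w}}}$. You merely spell out the routine details (normality of the slice maps and linearity of $\cN$) that the paper leaves implicit.
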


\begin{proof}
We know that
\[
\sN=\linW\bigl\{(\id\tens\omega)\act(y)\st{y}\in\cN,\:\omega\in\B(\Ltwo(\GG))_*\bigr\}.
\]
However, by \eqref{invN} the right hand side is contained in $\overline{\cN}^{\:\!\text{\tiny\rm{w}}}\!\!$, which is contained in $\sN$.
\end{proof}

Finally, as an application of Corollary \ref{weakPodles} let us argue that for actions of locally compact quantum groups on von Neumann algebra level an appropriate form of Podle\'s condition is automatically satisfied:

\begin{proposition}\label{stronPodles}
Let $\GG$ be a locally compact quantum group acting on a von Neumann algebra $\sN$ via $\act\colon\sN\to\sN\vtens\Linf(\GG)$. Then
\[
\linW\bigl\{\act(y)(\I\tens{x})\st{x}\in\Linf(\GG),\:y\in\sN\bigr\}=\sN\vtens\Linf(\GG).
\]
\end{proposition}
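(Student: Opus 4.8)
The plan is to prove the two inclusions separately, with the nontrivial one following from Corollary \ref{weakPodles} by a slicing argument. The inclusion $\linW\bigl\{\act(y)(\I\tens{x})\st{x}\in\Linf(\GG),\:y\in\sN\bigr\}\subset\sN\vtens\Linf(\GG)$ is immediate, since $\act(y)\in\sN\vtens\Linf(\GG)$ and $\I\tens{x}\in\I\vtens\Linf(\GG)\subset\sN\vtens\Linf(\GG)$, and $\sN\vtens\Linf(\GG)$ is ultra-weakly closed. So the real content is the reverse inclusion: every element of $\sN\vtens\Linf(\GG)$ lies in the ultra-weakly closed span $\sX$ of the products $\act(y)(\I\tens{x})$.

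First I would use the \emph{coassociativity of the action}: for $y\in\sN$, $(\act\tens\id)\act(y)=(\id\tens\Delta_\GG)\act(y)$. Since $\Delta_\GG(x)=W^\GG(\I\tens x)W^{\GG*}$ on the appropriate legs (the Kac--Takesaki operator implements $\Delta_\GG$), one can manipulate $\act(y)_{12}$ against $W^\GG_{23}$ much as in the proof of Proposition \ref{span}. Concretely, inserting a resolution of the identity $\I=\sum_i\ket{e_i}\!\bra{e_i}$ in $\Ltwo(\GG)$ between $\act(y)$ and $W^\GG$ and slicing the third leg with vector functionals $\omega_{\xi,\eta}$, one obtains an identity expressing each $\act\bigl((\id\tens\omega)\act(y)\bigr)(\I\tens z)$ — for suitable $z\in\Linf(\GG)$ coming from a slice of $W^\GG$ — as a sum of products $\act(y')(\I\tens x')$ with $y'\in\sN$ and $x'$ a slice of $W^\GG$ lying in $\Linf(\GG)$. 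This shows $\sX$ is stable under the relevant multiplications and, in particular, that it contains all elements $\act(y_1)(\I\tens z_1)\cdots$ one needs.

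The key step is then the following: by Corollary \ref{weakPodles}, $\sN=\linW\{(\id\tens\omega)\act(y)\st y\in\sN,\ \omega\in\B(\Ltwo(\GG))_*\}$. I would combine this with the fact that slices of $W^\GG$ against normal functionals on the left leg are ultra-weakly dense in $\Linf(\GG)$ — this is the defining density property of the Kac--Takesaki operator, namely $\Linf(\GG)=\linW\{(\omega\tens\id)W^\GG\st\omega\in\Linf(\hh\GG)_*\}$ — to produce, for any $a\in\sN$ and $b\in\Linf(\GG)$, an approximation of $a\tens b$ by linear combinations of elements $\bigl((\id\tens\omega)\act(y)\bigr)\tens\bigl((\varphi\tens\id)W^\GG\bigr)$. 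The point is to rewrite such an elementary tensor using the identity from the previous paragraph so that it appears as a slice (over an auxiliary leg) of an element of $\sX\vtens\B(\Ltwo(\GG))$, and hence lies in $\sX$; equivalently, one shows $\act(\sN)(\I\tens\Linf(\GG))$ already generates, after taking slices in the correct legs, all of $\sN\vtens\Linf(\GG)$. Since $\sX$ is ultra-weakly closed and a subspace, this gives $\sN\vtens\Linf(\GG)\subset\sX$.

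The main obstacle I anticipate is bookkeeping with the leg-numbering: one must carefully track which tensor leg carries $\sN$, which carries the ``$\Linf(\GG)$ of the action'', and which carries the auxiliary $\Ltwo(\GG)$ used to slice $W^\GG$, and ensure that the manipulation moving $W^\GG$ past $\act(y)$ (using the pentagon-type relation for $W^\GG$ together with $(\act\tens\id)\act=(\id\tens\Delta_\GG)\act$) is applied in the correct legs. Once the algebraic identity of the second paragraph is set up correctly, the density conclusion is a routine closure argument combining Corollary \ref{weakPodles} with the density of left slices of $W^\GG$ in $\Linf(\GG)$.
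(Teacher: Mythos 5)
The easy inclusion and the overall plan (reduce to elements $(\id\tens\omega)\act(y)$ via Corollary \ref{weakPodles} and then ``untwist'') are fine, but there is a genuine gap at the heart of your argument, and it starts with a concrete error in your second paragraph. The identity you borrow from the proof of Proposition \ref{span} commutes $\act(y)$ past elements of $\I\tens\Linf(\hh{\GG})$: after the pentagon/coassociativity manipulation, the factors appearing on the right are slices $(\id\tens\omega_{e_i,\eta})W^\GG$, which lie in $\Linf(\hh{\GG})$, \emph{not} in $\Linf(\GG)$ as you claim. To obtain $\Linf(\GG)$-valued slices you would have to slice the first (dual) leg of $W^\GG$, and then no analogous elementary identity holds: there is no simple way to move $\act(y)$ past $\I\tens{x}$ with $x\in\Linf(\GG)$, and this is exactly the difficulty of the proposition. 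Consequently the ``identity from the previous paragraph'' that your third paragraph relies on is not available, and the decisive step --- realizing $\bigl((\id\tens\omega)\act(y)\bigr)\tens\bigl((\varphi\tens\id)W^\GG\bigr)$ as a slice of an element of $\sX\vtens\B\bigl(\Ltwo(\GG)\bigr)$ --- is asserted but never constructed.

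What is missing is a genuinely stronger analytic input. The paper's proof (following \cite[Proposition 5.8]{BSV}) uses, besides Corollary \ref{weakPodles}, the density statement \eqref{regu} (equivalently \eqref{regu2}), which is a weak form of regularity of $W^\GG$ and rests on the ultra-weak density of $\Linf(\GG)\Linf(\hh{\GG})$ in $\B\bigl(\Ltwo(\GG)\bigr)$ (\cite[Proposition 2.5]{VV}); it is this fact that allows one to remove the unitary $V$ implementing $\Delta_\GG$ inside the ultra-weakly closed span and thereby untwist $\act(y)$ into $y\tens\I$. Your substitute --- the density of $\bigl\{(\omega\tens\id)W^\GG\bigr\}$ in $\Linf(\GG)$ --- is a much weaker, purely one-legged statement and cannot perform this untwisting. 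A sanity check that some such extra ingredient is unavoidable: all the tools you invoke (pentagon, coassociativity, density of slices of $W^\GG$) have exact \cst-algebraic counterparts, yet the norm version of the present proposition is precisely the Podle\'s condition, which is \emph{not} automatic and is imposed as an axiom in the \cst-setting (cf.~Remark \ref{after}); so a proof using only those formal manipulations cannot be correct. To repair the argument, establish \eqref{regu2} (or quote the Heisenberg-type density from \cite{VV}) and then run the chain of equalities as in the paper's proof.
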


\begin{proof}
First we recall that $\Linf(\GG)\Linf(\hh{\GG})$ is ultra-weakly dense in $\B\bigl(\Ltwo(\GG)\bigr)$ (\cite[Proposition 2.5]{VV}). This can be used instead of \cite[Proposition 3.2 \emph{b})]{BS} for the proof of the von Neumann algebraic version of \cite[Proposition 3.6 \emph{b})]{BS}:
\begin{equation}\label{regu}
\linW\bigl\{(b\tens\I)W^\GG(\I\tens{x})\st{x}\in\Linf(\GG),\:b\in\B(\Ltwo(\GG))\bigr\}=\B\bigl(\Ltwo(\GG))\vtens\Linf(\GG).
\end{equation}
By taking adjoints all elements of \eqref{regu}, applying the flip $\flip$, conjugating with $\hh{J}\tens\hh{J}$ and taking adjoints again we obtain
\begin{equation}\label{regu2}
\linW\bigl\{(\I\tens{b})V^*(x\tens\I)\st{x}\in\Linf(\GG),\:b\in\B(\Ltwo(\GG))\bigr\}=\Linf(\GG)\vtens\B\bigl(\Ltwo(\GG)),
\end{equation}
where $V=(\hh{J}\tens\hh{J})\flip({W^\GG}^*)(\hh{J}\tens\hh{J})\in\Linf(\GG)\vtens\Linf(\hh{\GG})'$. It is important to note that $V$ implements the comultiplication $\Delta_\GG$ via
\[
\begin{aligned}
\Delta(x)&=V^*(\I\tens{x})V,&\quad{x}\in\Linf(\GG).
\end{aligned}
\]

Now it is enough to repeat the proof of the first part of \cite[Proposition 5.8]{BSV} in our context: using in the first step Corollary \ref{weakPodles}, \eqref{regu2} in the ninth and Corollary \ref{weakPodles} again in the tenth, we obtain
\[
\begin{split}
\linW&\bigl\{\alpha(y)(\I\tens{x})\st{x}\in\Linf(\GG),\:y\in\sN\bigr\}\\
&=\linW\bigl\{\alpha((\id\tens\omega)\alpha(y))(\I\tens{x})\st{x}\in\Linf(\GG),\:y\in\sN,\:\omega\in\B(\Ltwo(\GG))_*\bigr\}\\
&=\linW\bigl\{(\id\tens\id\tens\omega)((\id\tens\Delta_\GG)\alpha(y))(\I\tens{x})\st{x}\in\Linf(\GG),\:y\in\sN,\:\omega\in\B(\Ltwo(\GG))_*\bigr\}\\
&=\linW\bigl\{(\id\tens\id\tens\omega)(V^*_{23}\alpha(y)_{13}V_{23}(\I\tens{x}\tens\I))\st{x}\in\Linf(\GG),\:y\in\sN,\:\omega\in\B(\Ltwo(\GG))_*\bigr\}\\
&=\linW\bigl\{(\id\tens\id\tens\omega)(V^*_{23}\alpha(y)_{13}V_{23}(\I\tens{x}\tens{b}))\st\\
&\qquad\qquad\qquad\qquad\qquad\qquad\qquad\qquad{x}\in\Linf(\GG),\:y\in\sN,\:\omega\in\B(\Ltwo(\GG))_*,\:b\in\B(\Ltwo(\GG))\bigr\}\\
&=\linW\bigl\{(\id\tens\id\tens\omega)(V^*_{23}\alpha(y)_{13}(\I\tens{x}\tens{b}))\st\\
&\qquad\qquad\qquad\qquad\qquad\qquad\qquad\qquad{x}\in\Linf(\GG),\:y\in\sN,\:\omega\in\B(\Ltwo(\GG))_*,\:b\in\B(\Ltwo(\GG))\bigr\}\\
&=\linW\bigl\{(\id\tens\id\tens\omega)(V^*_{23}\alpha(y)_{13}(\I\tens{x}\tens\I))\st{x}\in\Linf(\GG),\:y\in\sN,\:\omega\in\B(\Ltwo(\GG))_*\bigr\}\\
&=\linW\bigl\{(\id\tens\id\tens\omega)(V^*_{23}(\I\tens{x}\tens\I)\alpha(y)_{13})\st{x}\in\Linf(\GG),\:y\in\sN,\:\omega\in\B(\Ltwo(\GG))_*\bigr\}\\
&=\linW\bigl\{(\id\tens\id\tens\omega)((\I\tens\I\tens{b})V^*_{23}(\I\tens{x}\tens\I)\alpha(y)_{13})\st\\
&\qquad\qquad\qquad\qquad\qquad\qquad\qquad\qquad{x}\in\Linf(\GG),\:y\in\sN,\:\omega\in\B(\Ltwo(\GG))_*,\:b\in\B(\Ltwo(\GG))\bigr\}\\
&=\linW\bigl\{(\id\tens\id\tens\omega)((\I\tens{x}\tens{b})\alpha(y)_{13})\st\\
&\qquad\qquad\qquad\qquad\qquad\qquad\qquad\qquad{x}\in\Linf(\GG),\:y\in\sN,\:\omega\in\B(\Ltwo(\GG))_*,\:b\in\B(\Ltwo(\GG))\bigr\}\\
&=\linW\bigl\{(\I\tens{x})(y\tens\I)\st{x}\in\Linf(\GG),\:y\in\sN\bigr\}=\sN\vtens\Linf(\GG).
\end{split}
\]
\end{proof}

\subsection{Homomorphisms of quantum groups}\label{Hom}

We will follow \cite{MRW} and say that given two locally compact quantum groups $\HH$ and $\GG$, a \emph{homomorphism} from $\HH$ to $\GG$ is described by a \emph{bicharacter} from $\HH$ to $\GG$, i.e.~a unitary $V\in\M\bigl(\C_0(\hh{\GG})\tens\C_0(\HH)\bigr)$ such that
\[
(\Delta_{\hh{\GG}}\tens\id)V=V_{23}V_{13}\quad\text{and}\quad(\id\tens\Delta_\HH)V=V_{12}V_{13}.
\]
There is a bijection between bicharacters from $\HH$ to $\GG$ and \emph{right quantum homomorphisms} from $\HH$ to $\GG$ which are elements $\rho\in\Mor\bigl(\C_0(\GG),\C_0(\GG)\tens\C_0(\HH)\bigr)$ such that
\begin{equation}\label{rqh}
(\Delta_\GG\tens\id)\comp\rho=(\id\tens\rho)\comp\Delta_\GG\quad\text{and}\quad(\id\tens\Delta_\HH)\comp\rho=(\rho\tens\id)\comp\rho.
\end{equation}
Given a bicharacter $V$ as above the corresponding right quantum homomorphism $\rho$ is defined by
\[
\begin{aligned}
\rho(a)&=V(a\tens\I)V^*,&\quad{a}\in\C_0(\GG).
\end{aligned}
\]
Clearly $\rho$ extends to a normal unital $*$-homomorphism $\Linf(\GG)\to\Linf(\GG)\vtens\Linf(\HH)$ and formulas \eqref{rqh} still hold with all maps appropriately extended. We will call such a map $\rho$ a \emph{\wst-right quantum homomorphism} from $\HH$ to $\GG$.

Repeating the steps of \cite[Proof of theorem 5.3]{MRW} we obtain the following proposition:

\begin{proposition}\label{wstrqh}
Let $\rho\colon\Linf(\GG)\to\Linf(\GG)\vtens\Linf(\HH)$  be a \wst-right quantum homomorphism from $\HH$ to $\GG$. Then there exists a unique bicharacter $V$ from $\HH$ to $\GG$ such that
\[
\begin{aligned}
\rho(x)&=V(x\tens\I)V^*,&\quad{x}\in\Linf(\GG).
\end{aligned}
\]
\end{proposition}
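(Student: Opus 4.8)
The plan is to reconstruct the bicharacter $V$ from the \wst-right quantum homomorphism $\rho$ by ``integrating'' $\rho$ against the Kac--Takesaki operator, exactly as is done on the \cst-level in \cite{MRW}, and then to verify that the von Neumann-algebraic hypotheses on $\rho$ suffice to carry the argument through. Concretely, I would set
\[
V=(\id\tens\rho)W^\GG\in\Linf(\hh{\GG})\vtens\Linf(\GG)\vtens\Linf(\HH),
\]
interpreted in the appropriate legs, and first check that this element lies in $\Linf(\hh{\GG})\vtens\Linf(\HH)$ (legs $1$ and $3$), i.e.\ that it is of the form needed for a bicharacter from $\HH$ to $\GG$. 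For this one writes $W^\GG\in\Linf(\hh{\GG})\vtens\Linf(\GG)$, applies $\rho$ to the second leg to land in $\Linf(\hh{\GG})\vtens\Linf(\GG)\vtens\Linf(\HH)$, and then uses a slicing/Podle\'s-type argument together with the first intertwining relation in \eqref{rqh} to show that the $\Linf(\GG)$-leg disappears. The key identity here is $(\Delta_\GG\tens\id)\comp\rho=(\id\tens\rho)\comp\Delta_\GG$ applied to $W^\GG$, combined with the pentagon-type relation $(\id\tens\Delta_\GG)W^\GG=W^\GG_{12}W^\GG_{13}$; slicing the resulting equality with normal functionals on $\Linf(\GG)$ and using that such slices of $W^\GG$ generate $\Linf(\hh{\GG})$ ultra-weakly should pin down $V$ as an element with trivial middle leg.

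The second step is to verify the two bicharacter equations for $V$. The equation $(\Delta_{\hh{\GG}}\tens\id)V=V_{23}V_{13}$ follows directly from the pentagon relation $(\Delta_{\hh{\GG}}\tens\id)W^\GG=W^\GG_{23}W^\GG_{13}$ by applying $\id\tens\id\tens\rho$ and keeping track of leg positions; no property of $\rho$ beyond normality is needed here. The equation $(\id\tens\Delta_\HH)V=V_{12}V_{13}$ is where the second relation in \eqref{rqh}, namely $(\id\tens\Delta_\HH)\comp\rho=(\rho\tens\id)\comp\rho$, is used: one applies $(\id\tens\id\tens\Delta_\HH)$ to $V=(\id\tens\rho)W^\GG$, rewrites the right-hand side using $(\id\tens\Delta_\HH)\comp\rho=(\rho\tens\id)\comp\rho$, and then re-expresses $(\id\tens\rho\tens\id)(\id\tens\rho)W^\GG$ in terms of $V$ in legs $\{1,2\}$ and $\{1,3\}$. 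Finally one checks the defining formula $\rho(x)=V(x\tens\I)V^*$: this amounts to computing $V(x\tens\I)V^*$ for $x\in\Linf(\GG)$ using $V=(\id\tens\rho)W^\GG$ and the fact that $W^\GG$ implements the trivial action on the second leg in the sense that $W^\GG(\I\tens x){W^\GG}^*$ together with the first relation in \eqref{rqh} reproduces $\rho$; slicing the first leg with normal functionals on $\Linf(\hh{\GG})$ and using density of such slices in $\Linf(\GG)$ then yields the claimed identity on all of $\Linf(\GG)$.

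For uniqueness, suppose $V'$ is another bicharacter from $\HH$ to $\GG$ with $\rho(x)=V'(x\tens\I)V'^*$ for all $x\in\Linf(\GG)$. Then $V^*V'$ commutes with $\Linf(\GG)\tens\I$ in $\B(\Ltwo(\GG))\vtens\Linf(\HH)$, but more is true: using the bicharacter equation in the $\hh{\GG}$-leg for both $V$ and $V'$ one shows that $V^*V'$ also lies in $\Linf(\hh{\GG})'\vtens\Linf(\HH)$ after suitable manipulations, and since $\Linf(\GG)'\cap\Linf(\hh{\GG})'=\CC\I$ (the quantum group analog of the fact that $\B(\Ltwo(\GG))$ is generated by $\Linf(\GG)$ and $\Linf(\hh{\GG})$), we get $V^*V'=\I\tens u$ for a unitary $u\in\Linf(\HH)$; then the second bicharacter equation forces $\Delta_\HH(u)=u\tens u$ with $u$ a ``grouplike'' unitary, and normalization (or a direct argument using that $V,V'$ both restrict correctly) gives $u=\I$.

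The main obstacle I expect is the first step: making rigorous, in the purely von Neumann-algebraic setting, that $V=(\id\tens\rho)W^\GG$ genuinely has trivial middle leg, i.e.\ belongs to $\Linf(\hh{\GG})\vtens\Linf(\HH)$ rather than merely to $\Linf(\hh{\GG})\vtens\Linf(\GG)\vtens\Linf(\HH)$. In the \cst-picture of \cite{MRW} this is handled via continuity and the structure of multiplier algebras; here one must instead exploit Podle\'s-type density (as in Corollary \ref{weakPodles}) and the slicing techniques developed in Subsection \ref{cp}, being careful that the relevant spans are only ultra-weakly — not norm — dense. Once the element $V$ is correctly located, the remaining verifications are essentially formal leg-numbering computations driven by the two relations in \eqref{rqh} and the pentagon equations for $W^\GG$, so I do not anticipate further serious difficulty there. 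This is precisely the sense in which the proposition is obtained by ``repeating the steps of \cite[Proof of Theorem 5.3]{MRW}'' in the von Neumann-algebraic context.
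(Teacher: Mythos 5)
There is a genuine gap, and it sits exactly at the step you flag as the main obstacle: the element $(\id\tens\rho)W^\GG$ does \emph{not} have trivial middle leg, so no slicing or Podle\'s-type density argument can place it in $\Linf(\hh{\GG})\vtens\Linf(\HH)$. In the conventions of the paper (and of \cite{MRW}), if $V$ is the bicharacter implementing $\rho$ via $\rho(x)=V(x\tens\I)V^*$, then the bicharacter relation $(\Delta_{\hh{\GG}}\tens\id)V=V_{23}V_{13}$ combined with the pentagon equation for $W^\GG$ gives $V_{23}W^\GG_{12}V_{23}^*=W^\GG_{12}V_{13}$, hence $(\id\tens\rho)W^\GG=W^\GG_{12}V_{13}$: your candidate is off by the factor $W^\GG_{12}$. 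The simplest test case already shows this: take $\HH=\GG$ and $\rho=\Delta_\GG$ (a perfectly good \wst-right quantum homomorphism, whose bicharacter must be $W^\GG$ itself); then $(\id\tens\Delta_\GG)W^\GG=W^\GG_{12}W^\GG_{13}$, whose middle leg is manifestly nontrivial. Consequently the subsequent verifications (the two bicharacter equations and the implementation formula) cannot go through for the element you defined.

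The construction one gets by repeating \cite[Proof of Theorem 5.3]{MRW} in the von Neumann setting defines $V$ instead through $V_{13}:=(W^\GG_{12})^*\,(\id\tens\rho)(W^\GG)$, and the real work is to show that this element has trivial middle leg. That is done not by density arguments but by applying $\Delta_\GG$ to the middle leg: using $(\Delta_\GG\tens\id)\comp\rho=(\id\tens\rho)\comp\Delta_\GG$ together with $(\id\tens\Delta_\GG)W^\GG=W^\GG_{12}W^\GG_{13}$ one finds that every slice $x$ of the middle leg satisfies $\Delta_\GG(x)=\I\tens x$, and ergodicity (the same fact invoked in the proof of Proposition \ref{injpi}) forces $x\in\CC\I$. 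After that, the $\hh{\GG}$-leg bicharacter equation follows from the pentagon, the $\HH$-leg equation from the second relation in \eqref{rqh}, and $\rho(x)=V(x\tens\I)V^*$ by slicing the first leg of $(\id\tens\rho)W^\GG=W^\GG_{12}V_{13}$; uniqueness is then immediate, since this identity determines $V_{13}=(W^\GG_{12})^*(\id\tens\rho)(W^\GG)$. Your uniqueness sketch is also shakier than needed: the claim that a ``grouplike'' unitary $u\in\Linf(\HH)$ with $\Delta_\HH(u)=u\tens u$ must be trivial is false in general (for $\HH$ a classical group these are the characters of $\HH$), so that route would require the first bicharacter equation, not a normalization, to pin $V$ down.
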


It follows from Proposition \ref{wstrqh} that \wst-right quantum homomorphisms from $\HH$ to $\GG$ are in bijection with right quantum homomorphisms from $\HH$ to $\GG$ and, consequently, also with bicharacters from $\HH$ to $\GG$.

\section{Quantum groups with projection}\label{QGP}

\begin{definition}\label{GprojH}
Let $\GG$ and $\HH$ be locally compact quantum groups. We say that $\GG$ is a \emph{quantum group with projection onto $\HH$} if there exists a unital normal $*$-homomorphism $\pi\colon\Linf(\HH)\to\Linf(\GG)$ and a \wst-right quantum homomorphism $\th\colon\Linf(\GG)\to\Linf(\GG)\vtens\Linf(\HH)$ such that
\begin{equation}\label{pitenspi}
(\pi\tens\pi)\comp\Delta_\HH=\Delta_\GG\comp\pi
\end{equation}
and
\begin{equation}\label{thpi}
\th\comp\pi=(\pi\tens\id)\comp\Delta_\HH.
\end{equation}
\end{definition}

\begin{example}\label{ClassEx1}
Let $G$ and $H$ be locally compact groups and let $\rh\colon{G}\to{H}$ and $\imath\colon{H}\to{G}$ be continuous homomorphisms such that
\begin{equation}\label{rhoiota}
\rh\comp\imath=\id_{H}.
\end{equation}
As mentioned in the introduction, this means that $G$ is isomorphic to a semidirect product of $H$ and $K=\ker{\rh}$. Let $\pi$ be the unital normal $*$-homomorphism $\Linf(H)\to\Linf(G)$ corresponding to the homomorphism $\rh$:
\[
\begin{aligned}
\pi(f)&=f\comp\rh,&\quad{f}\in\Linf(H)
\end{aligned}
\]
and let $\th\colon\Linf(G)\to\Linf(G)\vtens\Linf(H)\cong\Linf(G\times{H})$ be the \wst-right quantum homomorphism associated to the homomorphism $\imath$:
\[
\begin{aligned}
\th(f)(g,h)&=f\bigl(g\cdot\imath(h)\bigr),&\quad{f}\in\Linf(G),\;(g,h)\in{G}\times{H}.
\end{aligned}
\]
It is easy to verify that \eqref{pitenspi} and \eqref{thpi} are satisfied. 

Clearly, it follows from \eqref{rhoiota} $\rh$ is surjective and $\imath$ is injective. Thus $\imath(H)$ is a closed subgroup of $G$ and $H$ itself is a quotient of $G$. The latter fact is reflected in injectivity of $\pi$.
\end{example}

As explained above, in the classical situation described in Example \ref{ClassEx1}, the maps  $\rh$ and $\imath$ satisfying \eqref{rhoiota} are surjective and injective respectively. The same phenomenon can be seen on the level of quantum groups. This is the content of Proposition \ref{injpi} and Theorem \ref{subgroup} below.

\begin{proposition}\label{injpi}
Let $\GG$ and $\HH$ be locally compact quantum groups. Assume further that $\GG$ is a quantum group with projection onto $\HH$ with corresponding maps $\pi$ and $\th$. Then $\pi$ is injective.
\end{proposition}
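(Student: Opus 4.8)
The plan is to exploit equation \eqref{thpi} together with the fact that $\th$ is a \wst-right quantum homomorphism, hence implemented by a bicharacter. Concretely, apply $\th$ to both sides of $\pi(y)$ and use $\th\comp\pi=(\pi\tens\id)\comp\Delta_\HH$; if $\pi(y)=0$ then $(\pi\tens\id)\Delta_\HH(y)=0$. The idea is now to ``invert'' $\Delta_\HH$ by slicing: since $\Delta_\HH$ is injective and one can recover $y$ from $\Delta_\HH(y)$ by applying $\id\tens\omega$ for a suitable net of functionals $\omega$ converging to the counit (or, more robustly in the locally compact setting, by using the right Haar weight or the Kac--Takesaki operator of $\HH$ to produce a left inverse of $\Delta_\HH$), one would like to conclude $\pi(y)=\pi\bigl((\id\tens\omega)\Delta_\HH(y)\bigr)$ is a limit of things of the form $(\id\tens\omega)\bigl((\pi\tens\id)\Delta_\HH(y)\bigr)=(\id\tens\omega)(0)=0$, forcing $y=0$.

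Making the slicing step clean is where I expect the main obstacle to lie: there is no counit available as a normal functional in general, so one cannot naively write $y=(\id\tens\epsilon)\Delta_\HH(y)$. The standard remedy is to use the Kac--Takesaki operator $W^\HH\in\Linf(\hh{\HH})\vtens\Linf(\HH)$, which satisfies $\Delta_\HH(y)=W^\HH(\,\I\tens y\,){W^\HH}^*$ after the appropriate identification; slicing the first leg of this identity with vector functionals recovers $\Linf(\HH)$ as $\linW\{(\omega\tens\id)\Delta_\HH(y):y\in\Linf(\HH),\,\omega\in\B(\Ltwo(\HH))_*\}$, or one simply invokes that $\Delta_\HH$ admits a faithful normal conditional-expectation-type left inverse. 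Then $(\pi\tens\id)\Delta_\HH(y)=0$ yields, after slicing the first leg with an arbitrary normal functional $\omega$ on $\B(\Ltwo(\HH))$ composed with $\pi$ and reorganising, that $\pi$ annihilates an ultra-weakly dense subset of $\Linf(\HH)$ built from $y$ via $\Delta_\HH$; since $y$ itself lies in the ultra-weak closed span of such slices and $\pi$ is normal, $\pi(y)=0$ already gave us nothing new, so one must instead argue the other way: from $(\pi\tens\id)\Delta_\HH(y)=0$ and normality of $\pi\tens\id$, apply a left inverse $\Xi$ of $\Delta_\HH$ to get $\pi(y)=(\pi\comp\Xi\comp\Delta_\HH)(y)$, but $\Xi$ does not intertwine $\pi\tens\id$ with $\pi$ directly — so in fact the correct move is to slice: pick $\omega\in\Linf(\hh{\HH})_*$ with $(\omega\tens\id)\Delta_\HH=\id$ unavailable, hence instead use that $\Linf(\HH)=\linW\{(\id\tens\omega)\bigl(W^\HH(\I\tens z){W^\HH}^*\bigr)\}$ and push $\pi$ through.

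Given the delicacy above, I would actually organise the argument slightly differently and more in the spirit of the crossed-product machinery already developed: note that \eqref{thpi} exhibits $\pi$ as a morphism intertwining a trivial-ish coaction on $\Linf(\HH)$ (namely $\Delta_\HH$) with the coaction $\th$ on $\Linf(\GG)$, and combine this with \eqref{pitenspi} to show $\pi(\Linf(\HH))$ is stable and that $\pi$ has a one-sided inverse obtained by slicing $\th$ against the right Haar weight of $\GG$ restricted appropriately; concretely one expects a formula like $(\id\tens\id)\th\comp\pi=(\pi\tens\id)\Delta_\HH$ to let one define $E\colon\Linf(\GG)\to\Linf(\HH)$ with $E\comp\pi=\id_{\Linf(\HH)}$, and injectivity of $\pi$ is then immediate. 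The construction of $E$ is the real content; I would look for it among slices of $\th$ by invariant weights, using that $\th$ is a \wst-right quantum homomorphism so that $\th$ composed with counit-type functionals on the $\HH$-leg behaves like the identity in a limiting sense, and then promote this to an honest normal left inverse using faithfulness of the Haar weight. The hard part is precisely producing this left inverse $E$ in a way that is valid for genuine locally compact (not compact) quantum groups, where no normal counit exists; everything else is a routine diagram chase with the normality of all maps involved.
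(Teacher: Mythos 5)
Your opening step is sound and in fact coincides with the paper's starting point: from \eqref{thpi}, $\pi(y)=0$ gives $(\pi\tens\id)\Delta_\HH(y)=\th\bigl(\pi(y)\bigr)=0$, so $\Delta_\HH(\ker\pi)\subset\ker\pi\vtens\Linf(\HH)$. The genuine gap is everything after that. Your first route --- recovering $y$ from $\Delta_\HH(y)$ by slicing --- would need a normal counit (or functionals converging to one in a usable sense), which a general locally compact quantum group does not possess; moreover, slicing $(\pi\tens\id)\Delta_\HH(y)=0$ in the second leg only tells you that the slices $(\id\tens\omega)\Delta_\HH(y)$ again lie in $\ker\pi$, which is no new information. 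Your second route --- a normal left inverse $E\colon\Linf(\GG)\to\Linf(\HH)$ with $E\comp\pi=\id_{\Linf(\HH)}$ --- is never constructed, and such a conditional-expectation-type map need not exist; slicing $\th$ against the Haar weight does not produce one in general. Since you yourself defer both constructions as ``the real content'', the proposal stops exactly where the actual work begins and does not constitute a proof.

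The paper closes this gap by a purely von Neumann algebraic device that needs neither a counit nor an expectation. Since $\ker\pi$ is an ultra-weakly closed two-sided ideal, one has $\ker\pi=p\Linf(\HH)$ for a central projection $p$, and the invariance above gives $(p\tens\I)\Delta_\HH(p)=\Delta_\HH(p)$, i.e.\ $\Delta_\HH(p)\leq p\tens\I$. Conjugating this inequality by $J^\HH\tens\hh{J}^\HH$, using centrality of $p$ and $(J^\HH\tens\hh{J}^\HH)W^\HH(J^\HH\tens\hh{J}^\HH)={W^\HH}^*$, yields ${W^\HH}^*(p\tens\I)W^\HH\leq p\tens\I$, and conjugating back by $W^\HH$ gives the reverse inequality $p\tens\I\leq\Delta_\HH(p)$. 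Hence $\Delta_\HH(p)=p\tens\I$, and ergodicity of the comultiplication forces $p\in\CC\I$; the case $p=\I$ contradicts unitality of $\pi$, so $p=0$ and $\pi$ is injective. This projection/modular-conjugation/ergodicity argument is precisely the missing ingredient your sketch would need to become a proof.
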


\begin{proof}
Let  $p\in\Linf(\HH)$ be a central projection such that $\ker\pi=p\Linf(\HH)$. For any $x\in\ker{\pi}$ we have $(\pi\tens\id)\Delta_\HH(x)=\th\bigl(\pi(x)\bigr)=0$ which proves that $\Delta_\HH(\ker{\pi})\subset\ker{\pi}\vtens\Linf(\HH)$. In particular
\[
(p\tens\I)\Delta_\HH(p)=\Delta_\HH(p),
\]
so that
\begin{equation}\label{ppp}
\Delta_\HH(p)\leq{p}\tens\I
\end{equation}
Let $W^\HH$ be the Kac-Takesaki operator for $\HH$ and let $J^\HH$ and $\hh{J}^\HH$ be the modular conjugations for the right Haar weights of $\HH$ and $\hh{\HH}$ respectively. Conjugating both sides of \eqref{ppp} by $(J^\HH\tens\hh{J}^\HH)$ and using centrality of $p$ we obtain
\[
(J^\HH\tens\hh{J}^\HH)W^\HH(p\tens\I){W^\HH}^*(J^\HH\tens\hh{J}^\HH)=(J^\HH\tens\hh{J}^\HH)\Delta_\HH(p)(J^\HH\tens\hh{J}^\HH)\leq{p}\tens\I
\]
(cf.~\cite[Section 10.13, Corollary 1]{SZ}).

Moreover, since $(J^\HH\tens\hh{J}^\HH)W^\HH(J^\HH\tens\hh{J}^\HH)={W^\HH}^*$, we have
\[
{W^\HH}^*(p\tens\I)W^\HH\leq{p}\tens\I.
\]
Conjugating both sides of this inequality by $W^\HH$ we obtain
\[
p\tens\I\leq{W^\HH}(p\tens\I){W^\HH}^*=\Delta_\HH(p).
\]
Thus $\Delta_\HH(p)=p\tens\I$. Since the action of $\HH$ on itself is ergodic, we conclude that $p\in\CC\I$ (cf.~\cite[Theorem 2.1]{MRW}). The case $p=\I$ contradicts unitality of $\pi$, so $p$ must be the zero projection and consequently $\ker\pi=\{0\}$.
\end{proof}

We continue investigating the situation when $\GG$ is a locally compact quantum group with a projection onto $\HH$ with associated maps $\pi$ and $\th$. Since $\pi$ is injective, we can identify $\Linf(\HH)$ with its image inside $\Linf(\GG)$. Now let $U\in\M\bigl(\C_0(\hh{\GG})\tens\C_0(\HH)\bigr)$ be the bicharacter corresponding to the \wst-right quantum homomorphism $\th$. As $U$ implements $\th$ (cf.~Subsection \ref{Hom}) we have
\[
(\id\tens\th)U=U_{23}U_{12}U_{23}^*.
\]
On the other hand, once we regard $\pi$ as identity, formula \eqref{thpi} implies that $\th=\Delta_\HH$, so that
\begin{equation}\label{dwa}
(\id\tens\th)U=(\id\tens\Delta_\HH)U=U_{12}U_{13}.
\end{equation}
It follows that $U_{23}U_{12}U_{23}^*=U_{12}U_{13}$, so that the unitary $U\in\B\bigl(\Ltwo(\GG)\tens\Ltwo(\GG)\bigr)$ is multiplicative. In particular it follows that the scheme introduced in Definition \ref{GprojH} gives rise to an example of S.~Roy's \cst-quantum groups with projection (\cite[Definition 3.35]{RoyPhd}). Therefore, by \cite[Proposition 3.36]{RoyPhd} the operator $U\in\B\bigl(\Ltwo(\GG)\tens\Ltwo(\GG)\bigr)$ is a manageable multiplicative unitary.

It is known that each manageable multiplicative unitary defines a quantum group which, in principle, could fail to be a \emph{locally compact} quantum group as defined in \cite[Definition 4.1]{KV} (or equivalently \cite[Definition 1.1]{KVvN} or \cite[Definition 1.5]{mnw}). However, the theory of such quantum groups is also very rich (cf.~\cite{mmu,SolWor,MRW}). In the next theorem we will use this notion.

\begin{theorem}\label{subgroup}
Let $\GG$ and $\HH$ be locally compact quantum groups and assume that $\pi\colon\Linf(\HH)\to\Linf(\GG)$ and $\th\colon\Linf(\GG)\to\Linf(\GG)\vtens\Linf(\HH)$ define a projection of $\GG$ onto $\HH$. Let $U\in\M\bigl(\C_0(\hh{\GG})\tens\C_0(\HH)\bigr)$ be the bicharacter corresponding to the \wst-right quantum homomorphism $\th$. Then
\begin{enumerate}
\item\label{subgroup1} $\C_0(\HH)=\bigl\{(\omega\tens\id)U\st\omega\in\B(\Ltwo(\GG))_*\bigr\}^{\boldsymbol{-}\|\cdot\|}$ and $\HH$ is a closed quantum subgroup of $\GG$ in the sense of Woronowicz (cf.~\cite[Definition 3.2]{DKSS}).
\item\label{subgroup2} $U\in\B\bigl(\Ltwo(\GG)\tens\Ltwo(\GG)\bigr)$ is a manageable multiplicative unitary and the corresponding quantum group coincides with $\HH$. In particular $\Linf(\hh{\HH})\subset\Linf(\hh{\GG})$ and $\Delta_{\hh{\HH}}=\bigl.\Delta_{\hh{\GG}}\bigr|_{\Linf(\hh{\HH})}$ and $\HH$ is a closed quantum subgroup of $\GG$ in the sense of Vaes (\cite[Definition 3.1]{DKSS}).
\end{enumerate}
\end{theorem}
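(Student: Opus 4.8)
The plan is to deduce both statements from the single fact, asserted in part~(2), that the quantum group defined by the multiplicative unitary $U$ is $\HH$ itself. Recall from the discussion preceding the theorem that $U$ is a manageable multiplicative unitary on $\Ltwo(\GG)\tens\Ltwo(\GG)$, so by Woronowicz's theory it determines a quantum group $\HH_U$ with
\[
\Linf(\HH_U)=\linW\bigl\{(\omega\tens\id)U\st\omega\in\B(\Ltwo(\GG))_*\bigr\},\qquad
\Linf(\hh{\HH_U})=\linW\bigl\{(\id\tens\omega)U\st\omega\in\B(\Ltwo(\GG))_*\bigr\},
\]
and with $U$ playing the role of the Kac--Takesaki operator of $\HH_U$; in particular $U\in\Linf(\hh{\HH_U})\vtens\Linf(\HH_U)$.

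First I would dispose of the ``easy'' inclusions together with the compatibility of the comultiplications. Since $U$ is a bicharacter from $\HH$ to $\GG$ it lies in $\M\bigl(\C_0(\hh{\GG})\tens\C_0(\HH)\bigr)\subset\Linf(\hh{\GG})\vtens\Linf(\HH)$ (a non-degenerately represented \cst-algebra has its multiplier algebra inside its double commutant), so slicing the two legs gives $\Linf(\HH_U)\subset\Linf(\HH)$ and $\Linf(\hh{\HH_U})\subset\Linf(\hh{\GG})$. Moreover the two bicharacter relations for $U$, namely $(\id\tens\Delta_\HH)U=U_{12}U_{13}$ (this is \eqref{dwa}; the right leg of $U$ lies in $\Linf(\HH)$, where $\th=\Delta_\HH$) and $(\Delta_{\hh{\GG}}\tens\id)U=U_{23}U_{13}$, are exactly the relations characterising $\Delta_{\HH_U}$ and $\Delta_{\hh{\HH_U}}$; slicing off one leg of $U$ and invoking normality yields $\Delta_{\HH_U}=\bigl.\Delta_\HH\bigr|_{\Linf(\HH_U)}$ and $\Delta_{\hh{\HH_U}}=\bigl.\Delta_{\hh{\GG}}\bigr|_{\Linf(\hh{\HH_U})}$.

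The core of the argument is the reverse inclusion $\Linf(\HH)\subset\Linf(\HH_U)$. Regarding $\pi$ as an inclusion, \eqref{thpi} says $\bigl.\th\bigr|_{\Linf(\HH)}=\Delta_\HH$, so for $a\in\Linf(\HH)$ we have $\Delta_\HH(a)=\th(a)=U(a\tens\I)U^*$. Because $U\in\Linf(\hh{\HH_U})\vtens\Linf(\HH_U)$ and the factor $a\tens\I$ perturbs only the first leg, the element $U(a\tens\I)U^*$ belongs to $\B(\Ltwo(\GG))\vtens\Linf(\HH_U)$; hence $(\mu\tens\id)\Delta_\HH(a)\in\Linf(\HH_U)$ for every normal functional $\mu$. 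On the other hand, applying Corollary \ref{weakPodles} to the comultiplication $\Delta_{\HH^\op}=\flip\comp\Delta_\HH$, viewed as an action of $\HH^\op$ on $\Linf(\HH^\op)=\Linf(\HH)$, gives $\linW\bigl\{(\mu\tens\id)\Delta_\HH(a)\st a\in\Linf(\HH),\ \mu\in\B(\Ltwo(\GG))_*\bigr\}=\Linf(\HH)$. Combining the two, $\Linf(\HH)\subset\Linf(\HH_U)$, so $\Linf(\HH_U)=\Linf(\HH)$, and together with the previous paragraph this identifies $\HH_U$ with $\HH$; in particular $U$ is the Kac--Takesaki operator $W^\HH$.

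It then remains to read off the conclusions. For part~(2): $\Linf(\hh{\HH})=\Linf(\hh{\HH_U})\subset\Linf(\hh{\GG})$ with $\Delta_{\hh{\HH}}=\bigl.\Delta_{\hh{\GG}}\bigr|_{\Linf(\hh{\HH})}$ is precisely the definition of $\HH$ being a closed quantum subgroup of $\GG$ in the sense of Vaes (\cite[Definition 3.1]{DKSS}). For part~(1): since $U=W^\HH$, the equality $\C_0(\HH)=\{(\omega\tens\id)U\st\omega\in\B(\Ltwo(\GG))_*\}^{\boldsymbol{-}\|\cdot\|}$ is the standard realisation of $\C_0(\HH)$ via the right slices of the Kac--Takesaki operator (equivalently, it is the \cst-algebra produced by Woronowicz's construction from $U$), and the assertion that $\HH$ is a closed quantum subgroup of $\GG$ in the sense of Woronowicz follows from the Vaes statement already established, via the implication ``Vaes $\Rightarrow$ Woronowicz'' of \cite{DKSS}. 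The genuinely delicate point is the reverse inclusion in the third paragraph: it is the one place where the interaction between $\pi$ and $\th$ encoded in \eqref{thpi} is essential, and it requires care about the realisation of all the algebras inside $\B(\Ltwo(\GG))$. Keeping the comultiplication conventions straight in the second paragraph is also routine but error-prone.
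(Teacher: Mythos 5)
Your von Neumann--level core step is fine as far as it goes: for $a\in\Linf(\HH)$ one indeed has $\Delta_\HH(a)=\th(a)=U(a\tens\I)U^*$, whose second leg lies in the weak closure of the left slices of $U$ (a commutant argument), and Corollary \ref{weakPodles} applied to $\flip\comp\Delta_\HH$ does recover $\Linf(\HH)$ from the left-leg slices of $\Delta_\HH$. So you correctly obtain $\linW\bigl\{(\omega\tens\id)U\st\omega\in\B(\Ltwo(\GG))_*\bigr\}=\Linf(\HH)$ together with the compatibility of comultiplications. The gap is in the next sentence: from this \emph{ultra-weak} identification you conclude that ``$U$ is the Kac--Takesaki operator $W^\HH$'' and hence that the \emph{norm}-closed span of the slices is $\C_0(\HH)$. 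That does not follow. What you have shown is that the \cst-algebra $A_U=\bigl\{(\omega\tens\id)U\bigr\}^{\boldsymbol{-}\|\cdot\|}$ produced by Woronowicz's theory is a weakly dense \cst-subalgebra of $\Linf(\HH)$ on which $\Delta_\HH$ restricts correctly; a weakly dense subalgebra with these properties need not equal $\C_0(\HH)$, and the uniqueness results you would want to invoke (\cite{mmu,SolWor}) are formulated for a given pair $(A,\Delta)$ at the \cst-level, so they cannot be applied before one knows $A_U=\C_0(\HH)$. The same gap infects your part (2): to get the Vaes statement you must identify $\linW\{(\id\tens\omega)U\}\subset\Linf(\hh{\GG})$ with $\Linf(\hh{\HH})$ \emph{compatibly with} $\Delta_{\hh{\HH}}$, and this identification again rests on knowing that the quantum group defined by $U$ coincides with $\HH$ at the \cst-algebraic level (so that the multiplicative-unitary duality machinery identifies the other leg with $\C_0(\hh{\HH})$), not merely that the weak closures and comultiplications agree. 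Deriving (1) from ``Vaes $\Rightarrow$ Woronowicz'' plus \cite[Theorem 3.6]{DKSS} is also circular at this point, since that route gives norm density of slices of the bicharacter canonically attached to the embedding, and identifying that bicharacter with $U$ is exactly the statement you have not proved.

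This missing \cst-level step is precisely what the paper's proof of statement \eqref{subgroup1} supplies, and it is the genuinely nontrivial part of the theorem: using \cite[Theorem 1.6]{mu} (applied to $\hh{U}$) to see that $\C_0(\KK)$, the slice algebra of $U$, satisfies $\C_0(\KK)\C_0(\HH)=\C_0(\HH)$, and then slicing the cancellation-law identity $\bigl(\C_0(\HH)\tens\I\bigr)U^*\bigl(\C_0(\HH)\tens\I\bigr)=U^*\bigl(\C_0(\HH)\tens\C_0(\HH)\bigr)$ to get $\C_0(\KK)=\C_0(\KK)\C_0(\HH)$, whence $\C_0(\KK)=\C_0(\HH)$ in norm. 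Only after this does the argument for part (2) (inclusion of $\C_0(\hh{\HH})$ in $\M(\C_0(\hh{\GG}))$ and comparison of $(\Delta_{\hh{\GG}}\tens\id)U=U_{23}U_{13}$ with $(\Delta_{\hh{\HH}}\tens\id)U=U_{23}U_{13}$) go through. So either reproduce an argument of this kind for norm density, or cite a result asserting that a manageable multiplicative unitary whose right leg is weakly dense in $\Linf(\HH)$ and implements $\Delta_\HH$ necessarily has $\C_0(\HH)$ as its slice algebra --- but such a result is not among the references you use, and without it the proposal does not prove either half of the theorem as stated.
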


\begin{proof}
All relevant algebras and operators act on the Hilbert space $\Ltwo(\GG)$ or its tensor products. Let us therefore use a shorthand $\cH$ for this Hilbert space.

Ad \eqref{subgroup1}. We have established before the statement of the theorem that $U\in\B(\cH\tens\cH)$ is a manageable multiplicative unitary, so there is a quantum group $\KK$ corresponding to $U$. Since $U\in\M\bigl(\cK(\cH)\tens\C_0(\HH)\bigr)$ and at the same time $U\in\M\bigl(\cK(\cH)\tens\C_0(\KK)\bigr)$ generates $\C_0(\KK)$ (by \cite[Theorem 1.6]{mu} applied to $\hh{U}$), the identity map on $\C_0(\KK)$ viewed as a mapping $\C_0(\KK)\to\B(\cH)$ is a morphism from $\C_0(\KK)$ to $\C_0(\HH)$. This means that
\begin{equation}\label{KH1}
\C_0(\KK)\C_0(\HH)=\C_0(\HH).
\end{equation}

Recall now that $U$ implements the right quantum homomorphism $\th$ which is $\Delta_\HH$ (cf.~remarks before statement of the theorem). Now $\Delta_\HH$ satisfies the cancellation laws. In particular
\[
U\bigl(\C_0(\HH)\tens\I\bigr)U^*\bigl(\C_0(\HH)\tens\I\bigr)=\C_0(\HH)\tens\C_0(\HH)
\]
or, in other words,
\begin{equation}\label{toSlice}
\bigl(\C_0(\HH)\tens\I\bigr)U^*\bigl(\C_0(\HH)\tens\I\bigr)=U^*\bigl(\C_0(\HH)\tens\C_0(\HH)\bigr).
\end{equation}
Let us slice both sides of \eqref{toSlice} with functionals $\omega\in\B(\cH)_*$ on the left leg. The norm-closed linear span of the left hand side will be
\[
\bigl\{(\omega\tens\id)U\st\omega\in\B(\cH)_*\bigr\}^{\boldsymbol{-}\|\cdot\|}=\C_0(\KK),
\]
because
\[
\begin{split}
\C_0(\HH)\B(\cH)_*\C_0(\HH)&=\C_0(\HH)\cK(\cH)\B(\cH)_*\cK(\cH)\C_0(\HH)\\
&=\cK(\cH)\B(\cH)_*\cK(\cH)=\B(\cH)_*,
\end{split}
\]
since $\C_0(\HH)$ acts non-degenerately on $\cH$. Let us examine the slices of the right hand side of \eqref{toSlice}. For $a,a'\in\C_0(\HH)$ and $\omega\in\B(\cH)_*$ we have
\[
(\omega\tens\id)\bigl(U^*(a\tens{a'})\bigr)=(\omega\tens\id)\bigl(U^*(a\tens\I)\bigr)\cdot{a'}.
\]
Taking for $\omega$ a vector functional $\omega=\omega_{\xi,\eta}$ gives
\[
(\omega_{\xi,\eta}\tens\id)\bigl(U^*(a\tens\I)\bigr)=(\omega_{\xi,a\eta}\tens\id)(U^*)\in\C_0(\KK)
\]
and such elements span a dense subspace of $\C_0(\KK)$, again, because $\C_0(\HH)$ is a non-degenerate \cst-subalgebra of $\B(\cH)$.

It follows that the closed linear span of the slices of the right hand side of \eqref{toSlice} is $\C_0(\KK)\C_0(\HH)$ and equating this to the closed linear span of slices of the left hand side of \eqref{toSlice} yields
\begin{equation}\label{KH2}
\C_0(\KK)=\C_0(\KK)\C_0(\HH).
\end{equation}
Combining \eqref{KH1} and \eqref{KH2} we obtain $\C_0(\KK)=\C_0(\HH)$. The remaining part of statement \eqref{subgroup1} follows from \cite[Theorem 3.6]{DKSS}.

Ad \eqref{subgroup2}. We have already established that $U$ is a manageable multiplicative unitary. By \eqref{subgroup1} the quantum group defined by $U$ (\cite[Definition 3]{mmu}) coincides with $\HH$ (note that the right quantum homomorphism $\th=\Delta_\HH$ is implemented by $U$). 

The bicharacter $U$ belongs to $\M\bigl(\C_0(\hh{\GG})\tens\C_0(\HH)\bigr)$, so that $\C_0(\hh{\HH})\subset\M\bigl(\C_0(\hh{\GG})\bigr)\subset\Linf(\hh{\GG})$ and consequently $\Linf(\hh{\HH})\subset\Linf(\hh{\GG})$. Finally we recall that $U$ is a bicharacter, so in particular
\begin{equation}\label{DelGhU}
(\Delta_{\hh{\GG}}\tens\id)U=U_{23}U_{13}.
\end{equation}
But also $U$ is a manageable multiplicative unitary giving rise to the quantum group $\HH$, so the comultiplication on $\hh{\HH}$ satisfies
\[
(\Delta_{\hh{\HH}}\tens\id)U=U_{23}U_{13}.
\]
It follows that $\bigl.\Delta_{\hh{\GG}}\bigr|_{\Linf(\hh{\HH})}=\Delta_{\Linf(\hh{\HH})}$.
\end{proof}

\begin{remark}
By \cite[Theorem 3.5]{DKSS} we know that a closed subgroup of $\GG$ in the sense of Vaes is also a closed subgroup of $\GG$ in the sense of Woronowicz. Nevertheless we felt that the proof of statement \eqref{subgroup1} of Theorem \ref{subgroup} is of independent interest. Moreover the proof of this statement is a step in the proof of statement \eqref{subgroup2} of Theorem \ref{subgroup}.
\end{remark}

\begin{remark}
If $\GG$ is a locally compact quantum group with projection onto $\HH$ defined by $(\th,\pi)$ then it is not difficult to show that $\hh{\GG}$ is a locally compact quantum group with projection onto $\hh{\HH}$ with corresponding right quantum homomorphism 
\[
\hh{\th}\colon\Linf(\hh{\GG})\longrightarrow\Linf(\hh{\GG})\vtens\Linf(\hh{\HH})
\]
and a normal unital $*$-homomorphism
\[
\hh{\pi}\colon\Linf(\hh{\HH})\longrightarrow\Linf(\hh{\GG})
\]
defined as follows:
\begin{itemize}
\item[$\blacktriangleright$] the map $\hh{\pi}$ is the injection of $\Linf(\hh{\HH})$ into $\Linf(\hh{\GG})$ described in Theorem \ref{subgroup}\eqref{subgroup2}. Clearly $(\hh{\pi}\tens\hh{\pi})\comp\Delta_{\hh{\HH}}=\Delta_{\hh{\GG}}\comp\hh{\pi}$.
\item[$\blacktriangleright$] The normal unital and injective $*$-homomorphism $\pi\colon\Linf(\HH)\to\Linf(\GG)$ intertwining comultiplications means that $\hh{\HH}$ is a closed quantum subgroup of $\hh{\GG}$ in the sense of Vaes. In particular it is a closed quantum subgroup in the sense of Woronowicz and there exists a corresponding right quantum homomorphism $\hh{\th}\colon\Linf(\hh{\GG})\to\Linf(\hh{\GG})\vtens\Linf(\hh{\HH})$. By \cite[Theorem 3.3(3)]{DKSS} it is implemented by
\[
(\pi\tens\id)W^{\hh{\HH}}=\hh{U},
\]
where $\hh{U}$ is defined as $\flip(U^*)$. It follows that $\hh{\th}\comp\hh{\pi}=(\hh{\pi}\tens\id)\comp\Delta_{\hh{\HH}}$.
\end{itemize}
A similar result is also contained in \cite[Remarks after Definition 3.35]{RoyPhd}.
\end{remark}

\begin{example}\label{Exazb1}
Let us take for $\GG$ one of the quantum ``$az+b$'' groups introduced in \cite{azb,nazb}. The construction begins with a choice of a (not completely arbitrary) complex number $q$ and a corresponding to it multiplicative subgroup $\Gamma_q$ of $\CC\setminus\{0\}$ with a non-degenerate symmetric bicharacter $\chi\colon\Gamma_q\times\Gamma_q\to\TT$. Then $\C_0(\GG)$ turns out to be isomorphic to $\C_0(\Gamma_q\cup\{0\})\rtimes\Gamma_q$, where the action is by multiplication of complex numbers. We denote by $a$ the ``generator'' of this action, i.e.~the normal element affiliated with $\C_0(\GG)$ (\cite[Definition 1.1]{unbo}) such that for each $\gamma\in\Gamma_q$ the corresponding unitary in $\M\bigl(\C_0(\GG)\bigr)$ implementing the action of $\gamma$ on $\C_0(\Gamma_q\cup\{0\})$ is $\chi(a,\gamma)$. Then we let $b$ denote the image in $\C_0(\GG)$ of the element affiliated with $\C_0(\Gamma_q\cup\{0\})$ given by the identity function. Then $a$ and $b$ are normal elements affiliated with $\C_0(\GG)$ and
\[
ab=q^2ba\qquad\text{and}\qquad{a}b^*=q^2b^*a
\]
in an appropriate sense. Moreover $a^{-1}$ is also affiliated with $\C_0(\GG)$ and $\C_0(\GG)$ is generated by $a,a^{-1}$ and $b$ in the sense of \cite[Definition 3.1]{gen}.

The comultiplication $\Delta_\GG$ is given on generators by
\[
\Delta_\GG(a)=a\tens{a}\qquad\text{and}\qquad\Delta_\GG(b)=a\tens{b}\dot{+}b\tens\I,
\]
where $\dot{+}$ denotes the closure of a sum of unbounded operators. The locally compact group $\Gamma_q$ is a closed (quantum) subgroup of $\GG$ and the corresponding right quantum homomorphism is
\[
a\longmapsto{a\tens{a}}\qquad\text{and}\qquad{b}\longmapsto{b\tens{a}},
\]
with $a$ in the second tensor factor considered as an element affiliated with $\C_0(\Gamma_q)$. Moreover $\C_0(\Gamma_q)$ is also a subalgebra of $\M\bigl(\C_0(\GG)\bigr)$ via
\[
\pi\colon\C_0(\Gamma_q)\ni{f}\longmapsto{f(a)}\in\M\bigl(\C_0(\GG)\bigr)
\]
which can be easily seen to intertwine the appropriate comultiplication. 

All the structure described above has an extension to von Neumann algebras $\Linf(\GG)$ and $\Linf(\Gamma_q)$ and thus the quantum ``$az+b$'' group becomes a locally compact quantum group with projection onto $\Gamma_q$ (cf.~\cite[Section 6.2.3]{RoyPhd}). 
\end{example}

\section{Emergence of braided quantum group structure}\label{emer}

Let $\GG$ and $\HH$ be locally compact quantum groups and assume $\GG$ is a quantum group with projection onto $\HH$ with corresponding maps $\pi\colon\Linf(\HH)\to\Linf(\GG)$ and $\th\colon\Linf(\GG)\to\Linf(\GG)\vtens\Linf(\HH)$ (as in Definition \ref{GprojH}). Define
\[
\sN=\bigl\{x\in\Linf(\GG)\st\th(x)=x\tens\I\bigr\}.
\]
For any $x\in\sN$ we have (cf.~\eqref{rqh})
\[
(\id\tens\th)\Delta_\GG(x)=(\Delta_\GG\tens\id)\th(x)=\Delta_\GG(x)\tens\I,
\]
so that $\Delta_\GG(\sN)\subset\Linf(\GG)\vtens\sN$, i.e.~$\sN$ is a left coideal in $\Linf(\GG)$.

Consider again the bicharacter $U$ corresponding to the \wst-right quantum homomorphisms $\th$. Originally $U$ was an element of $\M\bigl(\C_0(\hh{\GG})\tens\C_0(\HH)\bigr)\subset\Linf(\hh{\GG})\vtens\Linf(\HH)$. However we embedded $\Linf(\HH)$ into $\Linf(\GG)$ (via the map $\pi$) and then found in Theorem \ref{subgroup}\eqref{subgroup2} that $\Linf(\hh{\HH})$ is a subalgebra of $\Linf(\hh{\GG})$. Thus $U$ may be considered as an element of $\Linf(\hh{\HH})\vtens\Linf(\GG)$. In order to avoid confusion let us denote $U$ considered as belonging to $\Linf(\hh{\HH})\vtens\Linf(\GG)$ by $Y$. 

Using Theorem \ref{subgroup}\eqref{subgroup2} we find that
\[
(\Delta_{\hh{\HH}}\tens\id)Y=Y_{23}Y_{13}.
\]
Moreover formula \eqref{dwa} gives
\[
(\id\tens\th)Y=Y_{12}W_{13}^\HH
\]
(recall that $U=W^\HH$). We can now use Corollary \ref{cpProp} with $\sM=\Linf(\GG)$ and $\Act=\th$ to obtain

\begin{proposition}\label{Mcp}
The von Neumann algebra $\sN$ carries an action $\act$ of $\hh{\HH}^\op$ defined by
\begin{equation}\label{actN}
\sN\ni{y}\longmapsto\act(y)=\flip\bigl(Y(\I\tens{y})Y^*\bigr)\in\Linf(\GG)\vtens\Linf(\hh{\HH}^\op)
\end{equation}
and $\Linf(\GG)$ is isomorphic to the crossed product $\sN\rtimes_\act\hh{\HH}^\op$ is such a way that $\th$ becomes the dual action of ${\hh{\hh{\HH}^{\raisebox{.27ex}{\op}}}}^\op\cong\HH$. More precisely, there exists an isomorphism $\Psi\colon\Linf(\GG)\to\sN\rtimes_\act\hh{\HH}^\op$ such that
\begin{itemize}
\item[$\blacktriangleright$] $\hh{\act}\comp\Psi=(\Psi\tens\Phi)\comp\th$,
\item[$\blacktriangleright$] $\Psi(y)=\act(y)$ for all $y\in\sN$,
\item[$\blacktriangleright$] $(\Phi\tens\Psi)Y=W_{13}^\HH$,
\end{itemize}
where $\Phi$ is the mapping defined in Subsection \ref{oppcomm}.
\end{proposition}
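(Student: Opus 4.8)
The plan is to obtain Proposition~\ref{Mcp} as a direct application of Corollary~\ref{cpProp}, taken with $\sM=\Linf(\GG)$, with $\Act=\th$, and with the unitary $Y\in\Linf(\hh{\HH})\vtens\Linf(\GG)$ introduced above. Accordingly, the entire content of the proof is the verification that the hypotheses of that corollary are met in the present situation, followed by a transcription of its conclusions.

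First I would check that $\th$ is an action of $\HH$ on $\Linf(\GG)$ in the sense of Subsection~\ref{cp}. Although $\th$ is a \wst-right quantum homomorphism rather than a comultiplication, it is by definition a normal unital $*$-homomorphism, and the second relation in \eqref{rqh} reads $(\th\tens\id)\comp\th=(\id\tens\Delta_\HH)\comp\th$, which is exactly the required coaction identity. Moreover the fixed point subalgebra $\{x\in\Linf(\GG)\st\th(x)=x\tens\I\}$ of this action is literally the algebra $\sN$ defined at the start of the section, so no reconciliation is needed there. Next I would invoke the two identities for $Y$ already established before the statement: $(\Delta_{\hh{\HH}}\tens\id)Y=Y_{23}Y_{13}$, which comes from Theorem~\ref{subgroup}\eqref{subgroup2} (i.e.\ from $\Linf(\hh{\HH})\subset\Linf(\hh{\GG})$ with $\Delta_{\hh{\HH}}=\bigl.\Delta_{\hh{\GG}}\bigr|_{\Linf(\hh{\HH})}$, together with the fact that $U$, hence $Y$, is a bicharacter), and $(\id\tens\th)Y=Y_{12}W_{13}^\HH$, which is \eqref{dwa} rewritten after recalling that $U=W^\HH$ and that under $\pi=\id$ the restriction $\th|_{\Linf(\HH)}$ coincides with $\Delta_\HH$. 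These are precisely the relations imposed on the unitary in the hypothesis of Corollary~\ref{cpProp}, and $Y$ is of course unitary since $U$ is.

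With the hypotheses in place, Corollary~\ref{cpProp} applies directly and delivers every assertion of Proposition~\ref{Mcp}: that $\act(y)=\flip\bigl(Y(\I\tens y)Y^*\bigr)$ defines an action of $\hh{\HH}^\op$ on $\sN$ (this is formula \eqref{actN}), that $\Linf(\GG)\cong\sN\rtimes_\act\hh{\HH}^\op$ through an isomorphism $\Psi$ with $\hh{\act}\comp\Psi=(\Psi\tens\Phi)\comp\th$, with $\Psi(y)=\act(y)$ for $y\in\sN$, and with $(\Phi\tens\Psi)Y=W_{13}^\HH$, and that under this isomorphism $\th$ is identified with the dual action of ${\hh{\hh{\HH}^{\raisebox{.27ex}{\op}}}}^\op\cong\HH$. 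I expect the only genuinely delicate point --- the ``main obstacle'' --- to be the careful tracking of the leg positions and of the two identifications $\pi=\id$ and $\Linf(\hh{\HH})\subset\Linf(\hh{\GG})$, so that what is fed into Corollary~\ref{cpProp} is exactly the pair of relations it demands; once this bookkeeping is settled the statement follows with no further computation.
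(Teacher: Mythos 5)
Your proposal is correct and matches the paper's own argument: the authors likewise establish the two relations $(\Delta_{\hh{\HH}}\tens\id)Y=Y_{23}Y_{13}$ (via Theorem \ref{subgroup}\eqref{subgroup2}) and $(\id\tens\th)Y=Y_{12}W_{13}^\HH$ (via \eqref{dwa}) in the discussion preceding the statement and then obtain Proposition \ref{Mcp} as a direct application of Corollary \ref{cpProp} with $\sM=\Linf(\GG)$ and $\Act=\th$. Your explicit check that $\th$ satisfies the coaction identity from the second relation in \eqref{rqh} is a point the paper leaves implicit, but it is the same route.
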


By a slight extension of \cite[Theorem 5.5 and Lemma 5.7]{MRW} the right quantum homomorphism $\th$ has a \emph{left} counterpart, i.e.~a normal unital $*$-homomorphism $\thL\colon\Linf(\GG)\to\Linf(\HH)\vtens\Linf(\GG)$ characterized uniquely by the equality
\[
(\th\tens\id)\comp\Delta_\GG=(\id\tens\thL)\comp\Delta_\GG.
\]

\begin{example}\label{ExthL}
Let $G$ and $H$ be locally compact groups with maps $\rh\colon{G}\to{H}$ and $\imath\colon{H}\to{G}$ as in Example \ref{ClassEx1}. Then $\thL\colon\Linf(G)\to\Linf(H)\vtens\Linf(G)\cong\Linf(H\times{G})$ is given by
\[
\begin{aligned}
\thL(f)(h,g)&=f(\imath(h)\cdot{g}),&\quad{f}\in\Linf(G),\;(h,g)\in{H}\times{G}.
\end{aligned}
\]
Recall that we are by now identifying $\Linf(H)$ with its image in $\Linf(G)$ under the map $\pi$ associated with $\rho$. This means that $\thL$ may be regarded as mapping $\Linf(G)$ to $\Linf(G)\vtens\Linf(G)$ which transforms $f\in\Linf(G)$ into the mapping
\[
G\times{G}\ni(g_1,g_2)\longmapsto{f}\bigl(\rho(g_1)g_2\bigr).
\]
\end{example}

Just as $\th$, the map $\thL$ is injective (it is implemented by a unitary). We will use this map in the next definition.

\begin{definition}
We define the von Neumann algebra $\sN\boxtimes\sN$ as the von Neumann subalgebra of $\Linf(\GG)\vtens\Linf(\GG)$ generated by $\sN\tens\I$ and $\thL(\sN)$. We will call this von Neumann algebra the \emph{braided tensor product} of $\sN$ with itself.
\end{definition}

The terminology ``braided tensor product'' is not the only possibility. The paper \cite{MRW1} dealing with analogous objects on \cst-algebra level calls them ``twisted tensor products''. In \cite{WorBraid} the term ``crossed product'' is used. Our terminology is related to the notion of a braided group \cite{majid,WorBraid}, but we will not attempt to develop here the full theory of such braided tensor products of von Neumann algebras. The important feature of $\sN\boxtimes\sN$ is that it is defined as the algebra generated by two copies of $\sN$, namely $\sN\tens\I$ and $\thL(\sN)$, which do not necessarily commute (as they would in a regular tensor product). This is analogous to the more precise \cst-algebraic notions discussed in e.g.~\cite{majid0,majid} in the purely algebraic context and in \cite{WorBraid,MRW1} in the \cst-algebraic context.

Our aim in the remainder of this section is to show that $\sN$ carries a natural comultiplication with values not in $\sN\vtens\sN$, but in $\sN\boxtimes\sN$. We begin with the discussion of this phenomenon in the classical case.

\begin{example}\label{ClassEx2}
Let, as in Example \ref{ClassEx1}, $G$ and $H$ be locally compact groups with continuous homomorphisms $\rh\colon{G}\to{H}$ and $\imath\colon{H}\to{G}$ such that
\[
\rh\comp\imath=\id_{H}
\]
and associated maps $\pi\colon\Linf(H)\to\Linf(G)$ and $\th\colon\Linf(G)\to\Linf(G)\vtens\Linf(H)$. The fixed point algebra
\[
\sN=\bigl\{x\in\Linf(\GG)\st\th(x)=x\tens\I\bigr\}
\]
consists of functions on $\GG$ which are constant on left cosets of $H$ in $G$ (since $\imath$ is injective we regard $H$ as a subgroup of $G$). This algebra is isomorphic to the algebra of functions on $K=\ker{\rh}$.

As $G$ is isomorphic to $K\rtimes{H}$, we can write any element $g\in{G}$ uniquely as a product $g=kh$ (cf.~Section \ref{intro}). Using this we can define a map $\varLambda\colon\sN\vtens\sN\to\Linf(G)\vtens\Linf(G)$ by
\[
\begin{aligned}
\varLambda(\mathcal{F})(kh,k'h')&=\mathcal{F}(k,hk'),&\quad\mathcal{F}\in\sN\vtens\sN,\;k,k'\in{K},\;h,h'\in{H}.
\end{aligned}
\]
One easily finds that this map is an isomorphism onto its image. Moreover
\[
\begin{aligned}
\varLambda(f_1\tens\I)&=f_1\tens\I,&\quad{f_1}\in\sN
\end{aligned}
\]
and
\[
\begin{aligned}
\varLambda(\I\tens{f_2})&=\thL(f_2),&\quad{f_2}\in\sN,
\end{aligned}
\]
where in this context, $\thL$ maps $f\in\Linf(G)$ to a function $H\times{G}\ni(h,g)\mapsto{f}(hg)$ (cf.~Example \ref{ExthL}). It follows that $\varLambda$ is an isomorphism of $\sN\vtens\sN$ onto $\sN\boxtimes\sN$. 

Moreover, $\varLambda$ has another crucial property. Recall that $\sN$ is isomorphic to $\Linf(K)$, so it carries its own comultiplication $\Delta_K$. It is easily checked that
\begin{equation}\label{DKDG}
\varLambda\comp\Delta_K=\bigl.\Delta_G\bigr|_\sN.
\end{equation}
\end{example}

We will proceed to develop the theory of locally compact quantum groups with projection along the lines suggested in Example \ref{ClassEx2}. The crucial point is that in the fully non-commutative setting the map $\varLambda$ described above does not necessarily exist (the algebra $\sN\vtens\sN$ may no longer be isomorphic to $\sN\boxtimes\sN$), so that formula \eqref{DKDG} does not make sense. Nevertheless, as we will see in Theorem \ref{glowne} below, its right hand side continues to define a map $\sN\to\sN\boxtimes\sN$. 

It turns out that the quantum group $\GG$ may be described within the framework of \emph{extensions} (or \emph{short exact sequences}) of locally compact quantum groups from \cite{VaesVainerman} if only if $\sN$ commutes with $\Linf(\HH)$ inside $\Linf(\GG)$. In this case $\sN\boxtimes\sN$ is isomorphic to $\sN\vtens\sN$ and $\sN$ with comultiplication from $\Linf(\GG)$ is an algebra of functions on a quantum group and it gives rise to the ``normal subgroup'' entering the extension data (\cite{KaspSol2}).

Let, as before, $\GG$ and $\HH$ be locally compact quantum groups and let $(\pi,\th)$ define a projection of $\GG$ onto $\HH$. Define a unitary $F\in\Linf(\hh{\GG})\vtens\Linf(\GG)$ by
\[
W^\GG=FU,
\]
i.e.~$F={W^\GG}U^*$. This unitary is the \emph{braided multiplicative unitary} of \cite[Section 6]{RoyPhd}.

We have
\[
(\id\tens\th)F=(\id\tens\th)(W^\GG{U^*})=W^\GG_{12}U_{13}U_{13}^*U_{12}^*=F_{12},
\]
so $F$ actually belongs to $\Linf(\hh{\GG})\vtens\sN$. Therefore it is clear that
\[
\cN=\bigl\{(\omega\tens\id)F\st\omega\in\B(\Ltwo(\GG))_*\bigr\}
\]
is a subspace of $\sN$. It turns out that a stronger result is true:

\begin{proposition}\label{cNdense}
The subspace $\cN$ is ultra-weakly dense in $\sN$.
\end{proposition}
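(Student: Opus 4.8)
The plan is to use the Landstad-type description of $\Linf(\GG)$ as the crossed product $\sN\rtimes_\act\hh{\HH}^\op$ from Proposition \ref{Mcp}, together with the density criterion of Proposition \ref{dowcN}. Concretely, since $\cN\subset\sN$ and $\act$ is the action of $\hh{\HH}^\op$ on $\sN$ from \eqref{actN}, by Proposition \ref{dowcN} it suffices to show
\[
\linW\bigl\{\act(y)(\I\tens{x})\st{y}\in\cN,\:x\in\Linf(\hh{\hh{\HH}^\op})\bigr\}=\sN\rtimes_\act\hh{\HH}^\op,
\]
or equivalently, after transporting everything through the isomorphism $\Psi$ of Proposition \ref{Mcp} (which identifies $\act(y)$ with $\Psi(y)=\act(y)\in\sN\rtimes_\act\hh{\HH}^\op$ for $y\in\sN$, and carries the second-leg copy of $\Linf(\hh{\hh{\HH}^\op})$ into $\Linf(\HH)'\subset\Linf(\GG)$), that the ultra-weakly closed span of products of slices of $F$ by elements of an appropriate commutant copy generates all of $\Linf(\GG)$. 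The key identity to exploit is $W^\GG=FU=FY$: since $W^\GG$ is the Kac--Takesaki operator of $\GG$, its right slices are ultra-weakly dense in $\Linf(\GG)$, and its left slices in $\Linf(\hh{\GG})$; the factor $U=Y$ is built from the subgroup $\HH$ and is absorbed by the $\Linf(\HH)$-part of the crossed product.

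The steps I would carry out, in order: (1) Record that $F\in\Linf(\hh{\GG})\vtens\sN$ and that $\cN=\{(\omega\tens\id)F\}$; this is already in the excerpt. (2) Compute $\act$ on slices of $F$ using \eqref{actN} and the bicharacter/pentagon relations for $Y=U$ and $W^\GG$, so that $\act\bigl((\omega\tens\id)F\bigr)$ becomes a slice of a three-leg expression involving $W^\GG$ and $W^\HH$; the point is to show that multiplying such elements on the right by $\I\tens\Linf(\hh{\hh{\HH}^\op})$ and taking the ultra-weak closed span recovers $\sN\tens\I$ together with $\I\tens\Linf(\hh{\HH})$, i.e.\ the generators of $\sN\rtimes_\act\hh{\HH}^\op$. (3) More efficiently, instead of checking both generating sets separately, I would show directly that the span in \eqref{N3} for $\cN$ already contains $\act(\sN)$ and $\I\tens\Linf(\hh{\HH})$ (using Theorem \ref{subgroup}\eqref{subgroup2}), hence equals $\sN\rtimes_\act\hh{\HH}^\op$; alternatively one can invoke Proposition \ref{Nslice} after establishing the reverse inclusion. (4) Conclude via Proposition \ref{dowcN} that $\sN=\linW\{(\id\tens\omega)\act(y)\st y\in\cN\}$, and then observe that the right-hand side is contained in $\overline{\cN}^{\:\!\text{\tiny\rm{w}}}$ because $\cN$ is already a subspace of $\sN$ satisfying the invariance condition \eqref{invN} — one checks $\act(\cN)\subset\overline{\cN\tens\Linf(\hh{\HH}^\op)}^{\:\!\text{\tiny\rm{w}}}$ from the coproduct-type identity for $F$, namely that $(\id\tens\th')F$ or the relevant slice identity expresses $\act$ applied to a slice of $F$ as a combination of slices of $F$. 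Then the Corollary after Proposition \ref{dowcN} applies and gives $\overline{\cN}^{\:\!\text{\tiny\rm{w}}}=\sN$ directly.

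I expect the main obstacle to be step (3)/(4): verifying that $\cN$ is $\act$-invariant in the sense of \eqref{invN}, i.e.\ establishing an identity of the shape $\act\bigl((\omega\tens\id)F\bigr)=\sum_i(\omega_i\tens\id)F\otimes(\text{something in }\Linf(\hh{\HH}^\op))$. This amounts to a bicharacter computation: one needs a pentagon-type relation for $F={W^\GG}U^*$ under the coproduct-like map built from $\Delta_{\hh{\GG}}$ on the first leg, using $(\Delta_{\hh{\GG}}\tens\id)W^\GG=W^\GG_{23}W^\GG_{13}$, $(\Delta_{\hh{\HH}}\tens\id)U=U_{23}U_{13}$ with $\Delta_{\hh{\HH}}=\Delta_{\hh{\GG}}|_{\Linf(\hh{\HH})}$, and the commutation of $W^\GG$-legs with $U$-legs implied by $W^\GG=FU$ and $F\in\Linf(\hh{\GG})\vtens\sN$ (so that the third leg of $F$ lies in $\sN$, which is annihilated by $\th$). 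Once the correct three-fold relation is in place — which is essentially the statement that $F$ is a braided multiplicative unitary in the sense of \cite[Section 6]{RoyPhd} — the invariance \eqref{invN} drops out by slicing, and the density follows formally. I would flag that the cleanest route is probably to prove \eqref{N3} for $\cN$ directly rather than \eqref{invN}, since the former only requires showing the two explicit families of generators of $\sN\rtimes_\act\hh{\HH}^\op$ are reached, using the density of left and right slices of $W^\GG$.
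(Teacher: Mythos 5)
Your plan follows the paper's proof in both of its essential ingredients: (i) establish condition \eqref{N3} for $\cN$ by writing $W^\GG=FU$, inserting an orthonormal basis so that slices of $W^\GG$ (which span $\Linf(\GG)$ ultra-weakly) decompose into sums of products of slices of $F$ by slices of $U$, and transporting through the isomorphism $\Psi$ of Proposition \ref{Mcp}; and (ii) establish the $\act$-invariance $\act(\cN)\subset\overline{\cN\tens\Linf(\hh{\HH})}^{\:\!\text{\tiny\rm{w}}}$, after which the corollary following Proposition \ref{dowcN} gives the density. The paper obtains (ii) by slicing the commutation identity $U_{13}F_{23}U_{13}^*=U_{12}^*F_{23}U_{12}$, proved in a separate lemma from the pentagon equations and from \eqref{dwa} rewritten as $W^\GG_{23}U_{12}{W^\GG_{23}}^*=U_{12}U_{13}$ --- exactly the bicharacter computation you anticipate as the main obstacle, so your sketch is on target there, though you leave that identity itself unproved.

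One caution about your closing remark: proving \eqref{N3} for $\cN$ ``directly rather than \eqref{invN}'' is not an admissible shortcut. Condition \eqref{N3} together with Proposition \ref{dowcN} only yields $\sN=\linW\bigl\{(\id\tens\omega)\act(y)\st y\in\cN,\:\omega\in\B(\Ltwo(\GG))_*\bigr\}$; without the invariance \eqref{invN} there is no reason for the elements $(\id\tens\omega)\act(y)$, $y\in\cN$, to lie in $\overline{\cN}^{\:\!\text{\tiny\rm{w}}}$, so density of $\cN$ in $\sN$ does not follow. The corollary you invoke needs both hypotheses, and the invariance is precisely the step carried by the identity above. (A minor slip in your step (2): the generators of $\sN\rtimes_\act\hh{\HH}^\op$ are $\act(\sN)$ and $\I\tens\Linf(\HH)'$, not $\sN\tens\I$ and $\I\tens\Linf(\hh{\HH})$; this does not affect the route via slices of $W^\GG=FU$.)
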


For the proofs of Proposition \ref{cNdense} and Theorem \ref{glowne} we need several facts about $F$ (versions of these can be found in \cite[Proof of Theorem 6.7]{RoyPhd}).

\begin{lemma}
We have
\begin{subequations}
\begin{align}
U_{13}F_{23}U_{13}^*&=U_{12}^*F_{23}U_{12},\label{UFU}\\
(\id\tens\Delta_\GG)F&=F_{12}U_{12}F_{13}U_{12}^*.\label{DelF}
\end{align}
\end{subequations}
\end{lemma}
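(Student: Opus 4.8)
The plan is to derive both identities directly from the defining relation $W^\GG=FU$ together with the multiplicativity of $W^\GG$ and of $U$, and the bicharacter identity $(\id\tens\th)Y=Y_{12}W_{13}^\HH$, i.e. $(\id\tens\th)U=U_{12}U_{13}$, which was already computed before the statement as formula \eqref{dwa}. All operators act on (tensor powers of) $\cH=\Ltwo(\GG)$, with $W^\GG\in\Linf(\hh{\GG})\vtens\Linf(\GG)$ and $U\in\Linf(\hh{\HH})\vtens\Linf(\GG)\subset\Linf(\hh{\GG})\vtens\Linf(\GG)$, so $F={W^\GG}U^*$ lives in $\Linf(\hh{\GG})\vtens\Linf(\GG)$ and we already know $(\id\tens\th)F=F_{12}$.

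First I would establish \eqref{DelF}. Starting from $(\id\tens\Delta_\GG)W^\GG=W^\GG_{12}W^\GG_{13}$ and the fact that $U$ is multiplicative \emph{in its right leg too}: one checks $(\id\tens\Delta_\GG)U=U_{12}U_{13}$. (This follows because $\th=\Delta_\HH$ once $\pi$ is treated as an inclusion, and the second bicharacter equation $(\id\tens\Delta_\HH)U=U_{12}U_{13}$ holds; more directly, $U$ regarded as the Kac–Takesaki operator $W^\HH$ of $\HH$ satisfies exactly this pentagon-type relation with $\Delta_\GG\big|_{\Linf(\HH)}=\Delta_\HH$.) Then write
\[
(\id\tens\Delta_\GG)F=(\id\tens\Delta_\GG)W^\GG\cdot\bigl((\id\tens\Delta_\GG)U\bigr)^*=W^\GG_{12}W^\GG_{13}U_{13}^*U_{12}^*.
\]
Now substitute $W^\GG_{13}=F_{13}U_{13}$ and $W^\GG_{12}=F_{12}U_{12}$ to get $F_{12}U_{12}F_{13}U_{13}U_{13}^*U_{12}^*=F_{12}U_{12}F_{13}U_{12}^*$, which is precisely \eqref{DelF}.

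Next, for \eqref{UFU}, I would exploit that $U$ (being $W^\HH$ under the identifications) implements $\th=\Delta_\HH$ on $\Linf(\HH)$, but more to the point apply $(\id\tens\th)$ to $F$ in two ways, or compare the two expressions for $(\id\tens\Delta_\GG)W^\GG$. Concretely: apply $(\id\tens\th\tens\id)$ — no; the cleanest route is to use that $\th$ on the third leg of $F_{13}$ acts trivially: from $(\id\tens\th)F=F_{12}$ one gets, applying this in the appropriate legs, $(\id\tens\id\tens\th_{(3)})$ relations; alternatively, use the identity $\Delta_\GG\big|_{\Linf(\HH)}=\Delta_\HH$ and that $U$ normalizes things so that $U_{13}F_{23}U_{13}^*$ and $U_{12}^*F_{23}U_{12}$ both compute $(\id\tens\Delta_\GG)$ applied to $F$ in a disguised form. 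I would compute $(\id\tens\Delta_\GG)F$ a \emph{second} way using $W^\GG=UF'$ for a suitable $F'$, or simply rearrange \eqref{DelF}: since $(\id\tens\Delta_\GG)F$ also equals (by applying $\flip$-type manipulations, or by the coassociativity-driven uniqueness) an expression with the roles of the two $U$'s swapped, equating the two forms of $(\id\tens\Delta_\GG)F$ forces $U_{12}F_{13}U_{12}^*=U_{13}^*F_{12}\cdot(\ldots)$ — after relabelling legs $1\leftrightarrow2$ this is \eqref{UFU}.

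The step I expect to be the main obstacle is \eqref{UFU}: getting the leg placement right and identifying exactly which coassociativity or bicharacter relation produces the \emph{conjugation} identity rather than just a product identity. The safest derivation is probably: apply $\id\tens\Delta_\GG$ to the trivial identity $W^\GG=FU$ to obtain $W^\GG_{12}W^\GG_{13}=(F_{12}U_{12}F_{13}U_{12}^*)(U_{12}U_{13})$ using \eqref{DelF} and $(\id\tens\Delta_\GG)U=U_{12}U_{13}$; on the other hand directly $W^\GG_{12}W^\GG_{13}=F_{12}U_{12}F_{13}U_{13}$. Cancelling $F_{12}U_{12}$ on the left gives $F_{13}U_{13}=F_{13}U_{12}^*U_{12}U_{13}$, which is a tautology and hence not enough — so instead I would apply $\Delta_{\hh\GG}\tens\id$ to $W^\GG=FU$, use $(\Delta_{\hh\GG}\tens\id)W^\GG=W^\GG_{23}W^\GG_{13}$ and $(\Delta_{\hh\GG}\tens\id)U=U_{23}U_{13}$ (the latter from \eqref{DelGhU}), obtaining $W^\GG_{23}W^\GG_{13}=(\Delta_{\hh\GG}\tens\id)(F)\cdot U_{23}U_{13}$; substituting $W^\GG_{23}=F_{23}U_{23}$, $W^\GG_{13}=F_{13}U_{13}$ and solving yields $(\Delta_{\hh\GG}\tens\id)F=F_{23}U_{23}F_{13}U_{23}^*$. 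Comparing this with the analogous computation done with $U$ on the \emph{left} via the multiplicativity of $U$ as $W^\HH$ (which gives a pentagon relating $U_{23}F_{23}$-type terms), and using that $F$ commutes with the relevant copy of $\Linf(\HH)$ because $(\id\tens\th)F=F_{12}$ means $F_{13}$ is fixed by $\th$ in leg $3$, one extracts the conjugation relation $U_{13}F_{23}U_{13}^*=U_{12}^*F_{23}U_{12}$. I would present \eqref{DelF} first since it is needed as an ingredient, then \eqref{UFU}.
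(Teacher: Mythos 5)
Your derivation of \eqref{DelF} is correct and is essentially the paper's own argument in slightly different clothing: you apply $\id\tens\Delta_\GG$ multiplicatively to $W^\GG=FU$, using $(\id\tens\Delta_\GG)W^\GG=W^\GG_{12}W^\GG_{13}$ and $(\id\tens\Delta_\GG)U=U_{12}U_{13}$ (which is legitimate, since the right leg of $U$ lies in $\Linf(\HH)$, $\Delta_\GG$ restricted to $\Linf(\HH)$ is $\Delta_\HH$ by \eqref{pitenspi}, and $U$ is a bicharacter); the paper performs the same computation with $\Delta_\GG$ written as conjugation by $W^\GG_{23}$, i.e.\ using \eqref{dwadwa}. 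So the second identity is fine.

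For \eqref{UFU}, however, there is a genuine gap: none of your sketched routes is carried to completion. The idea of ``equating two forms of $(\id\tens\Delta_\GG)F$ with the roles of the two $U$'s swapped'' presupposes a second expression that you never establish; your computation $(\Delta_{\hh{\GG}}\tens\id)F=F_{23}U_{23}F_{13}U_{23}^*$ is correct but is never actually connected to \eqref{UFU}; and the final ``extraction'' rests on the claim that $F$ commutes with the relevant copy of $\Linf(\HH)$ because $(\id\tens\th)F=F_{12}$. That inference is false: $(\id\tens\th)F=F_{12}$ only says that the second leg of $F$ lies in the fixed-point algebra $\sN$, and $\sN$ need not commute with $\Linf(\HH)$ inside $\Linf(\GG)$ --- this non-commutativity is precisely why the paper introduces $\sN\boxtimes\sN$ rather than $\sN\vtens\sN$. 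What is actually needed, and what the paper proves in two lines, is that $F_{23}$ commutes with $U_{12}U_{13}$: by the pentagon for $U$ one has $U_{12}U_{13}=U_{23}U_{12}U_{23}^*$, hence
\[
F_{23}U_{12}U_{13}F_{23}^*=F_{23}U_{23}U_{12}U_{23}^*F_{23}^*=W^\GG_{23}U_{12}{W^\GG_{23}}^*=U_{12}U_{13},
\]
the last equality being exactly your relation $(\id\tens\Delta_\GG)U=U_{12}U_{13}$ expressed through the implementation of $\Delta_\GG$ by $W^\GG$; rearranging $F_{23}U_{12}U_{13}=U_{12}U_{13}F_{23}$ then yields $U_{13}F_{23}U_{13}^*=U_{12}^*F_{23}U_{12}$. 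You had all the ingredients (pentagon for $U$, $(\id\tens\Delta_\GG)U=U_{12}U_{13}$, $W^\GG=FU$) but never assembled them, so as written your proof of \eqref{UFU} is incomplete, even though you correctly recognized that applying $\id\tens\Delta_\GG$ to $W^\GG=FU$ alone gives only a tautology.
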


\begin{proof}
The information we will use is that $U$ and $W^\GG$ are multiplicative unitaries:
\[
\begin{split}
W_{23}^\GG{W_{12}^\GG}{W_{23}^\GG}^*&=W_{12}^\GG{W_{13}^\GG},\\
U_{23}U_{12}U_{23}^*&=U_{12}U_{13}
\end{split}
\]
and \eqref{dwa} which we rewrite as
\begin{equation}\label{dwadwa}
W_{23}^\GG{U_{12}}{W_{23}^\GG}^*=U_{12}U_{13}.,
\end{equation}
Using this and the pentagon equation for $W^\GG$ we compute
\[
\begin{split}
F_{12}U_{12}F_{13}U_{12}^*&=W^\GG_{12}U_{12}^*U_{12}W^\GG_{13}U_{13}^*U_{12}^*\\
&=W^\GG_{12}W^\GG_{13}U_{13}^*U_{12}^*\\
&=W^\GG_{23}W^\GG_{12}{W^\GG_{23}}^*W^\GG_{23}U_{12}^*{W^\GG_{23}}^*\\
&=W^\GG_{23}W^\GG_{12}U_{12}^*{W^\GG_{23}}^*\\
&=W^\GG_{23}F_{12}{W^\GG_{23}}^*
\end{split}
\]
which proves \eqref{DelF}.

To prove \eqref{UFU} note that, by the pentagon equation for $U$, \eqref{dwadwa} says that
\[
F_{23}U_{12}U_{13}F_{23}^*=F_{23}U_{23}U_{12}U_{23}^*F_{23}^*=W^\GG_{23}U_{12}{W^\GG_{23}}^*=U_{12}U_{13}
\]
and that means that $F_{23}$ commutes with $U_{12}U_{13}$. Therefore
\[
F_{23}^*(U_{12}U_{13})F_{23}(U_{12}U_{13})^*=(U_{12}U_{13})(U_{12}U_{13})^*=\I
\]
which is \eqref{UFU}.
\end{proof}

\begin{proof}[Proof of Proposition \ref{cNdense}]
Applying $\id\tens\omega\tens\id$ to both sides of \eqref{UFU} with all $\omega\in\B\bigl(\Ltwo(\GG)\bigr)_*$ and using the definition \eqref{actN} of $\act$ (recall that $U$ and $Y$ are the same element) we find that  the (slices of the) left hand side form the set $\flip\bigl(\act(\cN)\bigr)$. The slices of the right hand side all belong to $\overline{\Linf(\hh{\HH})\tens\cN}^{\:\!\text{\tiny\rm{w}}}\!\!$. Thus $\cN$ is $\act$-invariant in the following sense
\[
\act(\cN)\subset\overline{\cN\tens\Linf(\hh{\HH})}^{\:\!\text{\tiny\rm{w}}}.
\]

For the second step consider a slice of $W^\GG$ with a vector functional:
\[
\begin{split}
(\omega_{\xi,\eta}\tens\id)W^\GG&=(\omega_{\xi,\eta}\tens\id)(FU)\\
&=(\omega_{\xi,\eta}\tens\id)\biggl(F\sum_i\ket{e_i}\!\bra{e_i}U\biggr)\\
&=\sum_i\bigl((\omega_{\xi,e_i}\tens\id)F\bigr)\bigl((\omega_{e_i,\eta}\tens\id)U\bigr),
\end{split}
\]
where $\{e_i\}$ is an orthonormal basis of $\Ltwo(\GG)$. This shows that $\Linf(\GG)$ is the ultra-weakly closed linear span of products $ny$ with $n\in\cN$ and $y\in\Linf(\HH)$. Now using the isomorphism $\Psi$ from Proposition \ref{Mcp} we see that $\sN\rtimes_\act\hh{\HH}^\op$ is the ultra-weakly closed linear span of products of the form $\act(y)(\I\tens{x'})$ with $y\in\cN$ and $x'\in\Linf(\HH)'$. Thus, by Corollary \ref{invN}, the subspace $\cN$ is ultra-weakly dense in $\sN$.
\end{proof}

\begin{theorem}\label{glowne}
We have $\Delta_\GG(\sN)\subset\sN\boxtimes\sN$.
\end{theorem}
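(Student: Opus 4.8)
The plan is to exploit the density of $\cN$ in $\sN$ established in Proposition \ref{cNdense}: since $\Delta_\GG$ is normal and $\sN\boxtimes\sN$ is ultra-weakly closed, it suffices to show that $\Delta_\GG\bigl((\omega\tens\id)F\bigr)\in\sN\boxtimes\sN$ for every $\omega\in\B(\Ltwo(\GG))_*$. Applying $\omega\tens\id\tens\id$ to the pentagon-type identity \eqref{DelF}, namely $(\id\tens\Delta_\GG)F=F_{12}U_{12}F_{13}U_{12}^*$, we get
\[
\Delta_\GG\bigl((\omega\tens\id)F\bigr)=(\omega\tens\id\tens\id)\bigl(F_{12}U_{12}F_{13}U_{12}^*\bigr),
\]
so everything reduces to recognizing the right-hand side as living in $\sN\boxtimes\sN$. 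The factor $(\omega\tens\id)(F_{12}U_{12}F_{13}U_{12}^*)$ is, after a slice, a combination of terms of the shape $\bigl((\omega'\tens\id)F\bigr)\tens\I$ times $U(\,(\omega''\tens\id)F\otimes\I\,)U^*$-type expressions in legs $2$ and $3$; the first kind clearly sits in $\sN\tens\I$, so the crux is to show that conjugating $F_{13}$ (whose $\Linf(\GG)$-leg lies in $\sN$) by $U_{12}$ produces something in $\thL(\sN)$, or rather in the algebra generated by $\sN\tens\I$ and $\thL(\sN)$.

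First I would make precise how $U$-conjugation relates to $\thL$. Recall $W^\GG=FU$ with $U=W^\HH$ sitting in $\Linf(\hh\HH)\vtens\Linf(\HH)\subset\Linf(\hh\GG)\vtens\Linf(\GG)$, and that $\thL$ is the left quantum homomorphism characterized by $(\th\tens\id)\comp\Delta_\GG=(\id\tens\thL)\comp\Delta_\GG$. One has the standard fact (a slight extension of \cite[Lemma 5.7]{MRW}, or directly from the bicharacter picture) that $\thL$ is implemented by $U$: for $x\in\Linf(\GG)$,
\[
\thL(x)=U(x\tens\I)U^*
\]
once $\Linf(\HH)$ is identified with its image under $\pi$ inside $\Linf(\GG)$ and $U=W^\HH$ is viewed in $\Linf(\HH)\vtens\Linf(\GG)$ via the flip. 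Granting this, conjugating $F_{13}$ by $U_{12}$ (both operators now acting on $\Ltwo(\GG)^{\tens 3}$ with $U$ in legs $1,2$ and $F$ in legs $1,3$) amounts, after slicing the first leg against $\omega$, to applying $\thL$ in legs $2,3$ to the element $(\omega'\tens\id)F\in\sN$; this is because $U_{12}$ acts on legs $1$ and $2$ while $F_{13}$'s relevant leg is leg $3$, and the bicharacter/pentagon relation \eqref{dwadwa} lets one move the $U_{12}$ past into leg $3$ in a controlled way. Thus $U_{12}F_{13}U_{12}^*$ slices to elements of $\I\tens\thL(\sN)$-type terms, and the whole expression $F_{12}U_{12}F_{13}U_{12}^*$ slices to products of elements of $\sN\tens\I$ with elements of $\thL(\sN)$, which by definition lie in $\sN\boxtimes\sN$.

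The main obstacle I anticipate is the bookkeeping in the previous paragraph: making rigorous the claim that $U_{12}(\cdot)_{13}U_{12}^*$ induces $\thL$ on the appropriate leg. The clean way is probably to avoid manipulating leg-numbered operators directly and instead argue as follows. From $W^\GG=FU$ and the pentagon equation for $W^\GG$ together with \eqref{dwadwa}, one already has \eqref{DelF}. Now rewrite the right-hand side of \eqref{DelF} using $F_{13}=W^\GG_{13}U_{13}^*$ and push: $U_{12}F_{13}U_{12}^*=U_{12}W^\GG_{13}U_{13}^*U_{12}^*=U_{12}W^\GG_{13}(U_{12}U_{13})^{-1}$, and $U_{12}U_{13}=W^\GG_{23}U_{12}{W^\GG_{23}}^*$ by \eqref{dwadwa}; meanwhile, the leg-$2,3$ part is governed by $\Delta_\HH$, i.e.\ by the image of $W^\HH=U$ under its own comultiplication. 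The cleanest formulation is to check directly, for $n=(\omega\tens\id)F\in\cN$, that $\Delta_\GG(n)$ lies in the von Neumann algebra generated by $\sN\tens\I$ and $\thL(\sN)$ by testing against the defining relation $(\th\tens\id)\Delta_\GG = (\id\tens\thL)\Delta_\GG$: indeed for $n\in\sN$ we showed $\Delta_\GG(n)\in\Linf(\GG)\vtens\sN$, and applying $\th$ to the first leg kills it precisely when the first leg is in $\sN$, while $(\id\tens\thL)$ expresses $\Delta_\GG(n)$ through $\thL(\sN)$; combining these two pieces of information with \eqref{DelF} and the density of $\cN$ pins $\Delta_\GG(n)$ down to $\sN\boxtimes\sN$. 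Once this identification is in hand, normality of $\Delta_\GG$ and ultra-weak closedness of $\sN\boxtimes\sN$ finish the proof by Proposition \ref{cNdense}.
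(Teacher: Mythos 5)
Your overall skeleton matches the paper's: reduce to $\cN$ by Proposition \ref{cNdense} and normality of $\Delta_\GG$, apply \eqref{DelF}, recognize the $F_{12}$ factor as contributing $\sN\tens\I$, and then identify the remaining factor $U_{12}F_{13}U_{12}^*$ with $\thL$ applied to slices of $F$. But it is exactly this last identification that you do not actually prove, and the device you invoke for it is faulty. You assert that $\thL$ is implemented as $\thL(x)=U(x\tens\I)U^*$ with ``$U=W^\HH$ viewed in $\Linf(\HH)\vtens\Linf(\GG)$ via the flip''. This does not even type-check: $\flip(U)$ has second leg in $\Linf(\hh{\HH})$, not $\Linf(\GG)$; the unitary implementing a left quantum homomorphism (as for $\Delta_\GG$ itself, cf.\ the proof of Proposition \ref{stronPodles}) is obtained from the bicharacter by flipping \emph{and} conjugating by modular conjugations and lives in $\Linf(\HH)\vtens\Linf(\hh{\GG})'$. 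Moreover, even granting some implementation of $\thL$ in legs $2,3$, the conjugation in $U_{12}F_{13}U_{12}^*$ takes place in legs $1,2$, so after slicing leg $1$ it cannot be read off as ``applying $\thL$ to the third leg''; your appeal to \eqref{dwadwa} to ``move $U_{12}$ into leg $3$ in a controlled way'' is precisely the step that needs a computation and is never carried out.

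What is needed (and what the paper supplies) is the identity $(\id\tens\thL)F=U_{12}F_{13}U_{12}^*$, which combined with \eqref{DelF} gives $(\id\tens\Delta_\GG)F=F_{12}\bigl((\id\tens\thL)F\bigr)$ and hence $\Delta_\GG(\cN)\subset\sN\boxtimes\sN$. Its proof uses two inputs: $(\id\tens\thL)W^\GG=U_{12}W^\GG_{13}$ (from \cite[Theorem 5.5]{MRW}) and $(\id\tens\thL)U=U_{12}U_{13}$; the latter is not automatic but follows from the nontrivial lemma that $\thL$ restricted to $\Linf(\HH)\subset\Linf(\GG)$ coincides with $\Delta_\HH$, proved by a density argument from the characterizing relation $(\th\tens\id)\comp\Delta_\GG=(\id\tens\thL)\comp\Delta_\GG$ together with $\bigl.\th\bigr|_{\Linf(\HH)}=\Delta_\HH$. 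Your closing alternative --- that knowing $(\th\tens\id)\Delta_\GG(n)=(\id\tens\thL)\Delta_\GG(n)$ and the coideal property ``pins $\Delta_\GG(n)$ down to $\sN\boxtimes\sN$'' --- is not an argument: the first identity holds for every $n\in\Linf(\GG)$ by the very definition of $\thL$, so it carries no information singling out $\sN\boxtimes\sN$. So the proposal has the right strategy but a genuine gap at its central step.
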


\begin{proof}
By \cite[Theorem 5.5]{MRW} we have
\begin{equation}\label{key1}
(\id\tens\thL)(W^\GG)=U_{12}W^\GG_{13}.
\end{equation}
Now we note that on the subalgebra $\Linf(\HH)$ of $\Linf(\GG)$ the map $\th$ coincides with $\Delta_\HH$ (i.e.~$\bigl.\th\bigr|_{\Linf(\HH)}=\bigl.\Delta_\HH$). It follows that for any $y\in\Linf(\HH)$, $\omega\in\B\bigl(\Ltwo(\GG)\bigr)_*$ and $x=(\omega\tens\id)\Delta_\HH(y)$ we have
\[
\begin{split}
\thL(x)&=(\omega\tens\id)\bigl((\id\tens\thL)\Delta_\HH(y)\bigr)\\
&=(\omega\tens\id)\bigl((\id\tens\thL)\Delta_\GG(y)\bigr)\\
&=(\omega\tens\id)\bigl((\th\tens\id)\Delta_\GG(y)\bigr)\\
&=(\omega\tens\id)\bigl((\th\tens\id)\Delta_\HH(y)\bigr)\\
&=(\omega\tens\id)\bigl((\Delta_\HH\tens\id)\Delta_\HH(y)\bigr)\\
&=(\omega\tens\id)\bigl((\id\tens\Delta_\HH)\Delta_\HH(y)\bigr)=\Delta_\HH(x).
\end{split}
\]
Since such elements $x$ are ultra-weakly dense in $\Linf(\HH)$ we find that on $\Linf(\HH)$ we have $\thL=\Delta_\HH$ (and so it is also equal to $\th$). In particular
\begin{equation}\label{key2}
(\id\tens\thL)U=U_{12}U_{13}.
\end{equation}

Using \eqref{key1} and \eqref{key2} we obtain
\[
\begin{split}
(\id\tens\thL)F&=(\id\tens\thL)(W^\GG{U^*})\\
&=U_{12}W^\GG_{13}U_{13}^*U_{12}^*\\
&=U_{12}F_{13}U_{12}^*,
\end{split}
\]
so by \eqref{DelF}
\[
(\id\tens\Delta_\GG)F=F_{12}\bigl((\id\tens\thL)F\bigr),
\]
which shows that $\Delta_\GG(\cN)\subset\sN\boxtimes\sN$ and ends the proof.
\end{proof}

\begin{example}
Let us return to the quantum ``$az+b$'' groups mentioned already in Example \ref{Exazb1}. The \emph{reduced bicharacter}, i.e.~the image in $\M\bigl(\C_0(\hh{\GG})\tens\C_0(\GG)\bigr)$ of the multiplicative unitary operator giving rise to $\GG$ (see \cite[Section 4]{SolWor}) is explicitly given in \cite{azb,nazb} in the form
\[
W^\GG=\FF(\hh{b}\tens{b})\chi(\hh{a}\tens\I,\I\tens{a}),
\]
where $\FF$ is a certain special function, $a,b$ are the elements described in Example \ref{Exazb1} and $\hh{a},\hh{b}$ are analogous elements affiliated with $\C_0(\hh{\GG})$. This, of course is precisely the decomposition $W^\GG=FU$ discussed above. The algebra $\sN$ spanned by slices of $F$ is in this case equal to
\[
\bigl\{f(b)\st{f}\in\Linf(\Gamma_q\cup\{0\})\bigr\}
\]
which is isomorphic to $\Linf(\Gamma_q\cup\{0\})$ (see Example \ref{Exazb1}). The map $\th$ is in this case the (von Neumann algebra extension of the) action of $\Gamma_q$ on $\GG$ used in \cite[Section 3]{exa} to select elements of the \cst-algebra of function on the quantum homogeneous space $\GG/\Gamma_q$.
\end{example}

\end{document}